\newcommand{\R}{\mathbb{R}}
\newcommand{\Rn}{{\R^n}}
\newcommand{\N}{\mathbb{N}}
\newcommand{\eps}{\varepsilon}
\newcommand{\Chi}{{\mathcal{X}}}
\newcommand{\cS}{{\mathscr{S}}}
\newcommand{\cQ}{{\mathcal{Q}}}
\newcommand{\cB}{{\mathcal{B}}}
\newcommand{\cR}{{\mathcal{R}}}
\newcommand{\cI}{{\mathcal{I}}}
\newcommand{\cO}{{\mathcal{O}}}
\newcommand{\ftilde}{{\widetilde{f}}}
\newcommand{\fast}{f^\ast}
\newcommand{\Lone}{{L^1}}
\newcommand{\Loneloc}{{L^1_{\text{loc}}}}
\newcommand{\ra}{\rightarrow}
\newcommand{\bmo}{{\textnormal{BMO}}} 
\def\BMO#1#2{\textnormal{BMO}_{#1}^{#2}}
\newcommand{\vmo}{{\textnormal{VMO}}}
\def\VMO#1#2{\textnormal{VMO}_{#1}^{#2}}
\newcommand{\cmo}{{\textnormal{CMO}}}
\def\XXint#1#2#3{{\setbox0=\hbox{$#1{#2#3}{\int}$ }
		\vcenter{\hbox{$#2#3$ }}\kern-.57\wd0}}
\newtheorem{introtheorem}{Theorem}
\newtheorem{theorem}{Theorem}[section]
\newtheorem{lemma}[theorem]{Lemma}
\newtheorem{corollary}[theorem]{Corollary}
\newtheorem*{corollary*}{Corollary}
\theoremstyle{definition}
\newtheorem{definition}[theorem]{Definition}
\newtheorem{example}[theorem]{Example}
\theoremstyle{remark}
\newtheorem{remark}[theorem]{Remark}
\newenvironment{property}[1]
{\innercustomthm} 
{\endinnercustomthm}
\numberwithin{equation}{section}
\begin{document}
	
	
	\title[Vanishing mean oscillation and continuity of rearrangements]
{Vanishing mean oscillation and continuity of rearrangements}
	
	\author[Burchard]{Almut Burchard}
	\address{(A.B.) University of Toronto, Department of Mathematics, Toronto, ON M5S 2E4, Canada}
	\curraddr{}
	\email{almut@math.toronto.edu}
	
	\author[Dafni]{Galia Dafni}
	\address{(G.D.) Concordia University, Department of Mathematics and Statistics, Montr\'{e}al, QC H3G 1M8, Canada}
	\curraddr{}
	\email{galia.dafni@concordia.ca}
	\thanks{A.B. was partially supported by Natural Sciences and Engineering Research Council (NSERC) of Canada. G.D. was partially supported by the Natural Sciences and Engineering Research Council (NSERC) of Canada and the Centre de recherches math\'{e}matiques (CRM). R.G. was partially supported by the Centre de recherches math\'{e}matiques (CRM), the Institut des sciences math\'{e}matiques (ISM), and the Fonds de recherche du Qu\'{e}bec -- Nature et technologies (FRQNT)}
	
	\author[Gibara]{Ryan Gibara}
	\address{(R.G.) University of Cincinnati, Department of Mathematical Sciences, Cincinnati, OH 45221-0025, USA}
	\curraddr{}
	\email{ryan.gibara@gmail.com}
	
	\subjclass[2010]{42B35, 46E30 (49Q20)}
	
	\date{January 13,2022}
	
	\begin{abstract}
	We study the decreasing rearrangement of functions in VMO, and
show that for rearrangeable functions, the mapping $f \mapsto \fast$ preserves
vanishing mean oscillation.  Moreover, as a map on BMO, while bounded, it
 is not continuous, but continuity holds at points in VMO (under certain
conditions). This also applies to the symmetric decreasing rearrangement.
Many examples are included to illustrate the results.
		
	\end{abstract}
	
	\maketitle
	
	
	\section{{\bf  Introduction}}
	\label{sec:intro}
	
	
	For equimeasurable rearrangements, boundedness and continuity do not always go hand-in-hand.  On Lebesgue spaces, both the decreasing rearrangement and the symmetric decreasing rearrangement are non-expansive, but the situation is more complicated for the Sobolev spaces $W^{1,p}$, $1\le p<\infty$. The P\'olya--Szeg\H{o} inequality $\|\nabla Sf\|_{p}\le \|\nabla f\|_p$ ensures that symmetrization decreases the norm in these spaces. Furthermore, Coron~\cite{cor} proved that this rearrangement is continuous on $W^{1,p}(\R)$. However, Almgren and Lieb~\cite{al} discovered that continuity fails in $W^{1,p}(\Rn)$ in all higher dimensions $n>1$. In essence, convergence can fail because the symmetric decreasing rearrangement can reduce the measure of the critical set where $\nabla f$ vanishes. In one dimension, this is precluded by Sard's lemma; for the same reason, Steiner symmetrization is continuous in any dimension~\cite{bur}. Similar questions have been studied regarding the boundedness and continuity of maximal functions on Sobolev spaces and BV \cite{apl, cmp, luiro, mad}.
	
	Turning to the space BMO of functions of bounded mean oscillation, it is well known that the decreasing rearrangement (that is, the map $f\mapsto f^*$) is bounded on BMO: there are constants $C_n$, depending only on dimension, such that 
	$$
	\|\fast\|_{\bmo}\leq C_n \|f\|_{\bmo}
	$$
	whenever the decreasing rearrangement is defined. The sharp dependence of the constants $C_n$ or, indeed, if they do depend on dimension at all is still an open question for dimensions $n>1$. See \cite{BDG11} for a discussion of this inequality and for a proof that $C_n$ exhibits at most square-root-dependence on $n$. It was proven in \cite{BDG11} that the symmetric decreasing rearrangement (that is, the map $f\mapsto Sf$) is also bounded on BMO. The sharp constants and their dependence on dimension remains unknown for $n\geq{1}$. 
	
	In this paper, we address the question of continuity of these rearrangements on BMO. This question has not previously been studied in the literature, even in dimension $n=1$. We show by means of an example (see Example~\ref{ex-discont}) that the decreasing rearrangement is discontinuous on $\bmo(\R^n)$ and it follows that the same is true for the symmetric decreasing rearrangement. 
	The phenomenon in Example~\ref{ex-discont} is somewhat similar to that described above for the Sobolev spaces: the sequence $f_k$ and the limit $f=f^*$ have jumps of height 1, but the decreasing rearrangement erases the jumps, and $f_k^*$ is continuous. Unlike the situation in $W^{1,p}$, this can happen even in one dimension.

	To eliminate the possibility of jump discontinuities, we consider the subspace VMO of functions of vanishing mean oscillation, which often plays the role of the continuous functions within BMO. The definition of VMO
	originates with Sarason \cite{sar}, who identified the closure of the uniformly continuous functions in BMO with those functions whose mean oscillation over any cube converges to zero uniformly in the diameter of the cube. VMO can be viewed as the $0$-endpoint on the smoothness scale: vanishing mean oscillation is a common 
	minimal regularity condition on the coefficients of PDE~\cite{cfl, kry} and on the normal to the boundary of non-smooth domains~\cite{mms}. 
	
	Our first result shows that the decreasing rearrangement $f^*$ of a function $f\in\vmo$ is in $\vmo$. As a decreasing function of a single variable, $f^*$ has vanishing mean oscillation if and only if it satisfies a sub-logarithmic growth condition (i.e., a vanishing John-Nirenberg inequality) at the origin. Note that for functions in the critical Sobolev spaces $W^{s,n/s}(\Rn)$, which embed in VMO, stronger vanishing at the origin was proved by Hansson \cite{han} and Brezis-Wainger \cite{bw}. 
	
	\begin{introtheorem}[Boundedness]\label{introbound}
		Let $Q_0\subset\R^n$ be a cube. If $f\in \vmo(Q_0)$, then $f^\ast\in\vmo(0,|Q_0|)$.
	\end{introtheorem}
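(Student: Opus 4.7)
The plan is to invoke the one-dimensional characterization alluded to in the introduction: since $\fast$ is monotone on the bounded interval $(0, |Q_0|)$, membership in $\vmo(0, |Q_0|)$ is equivalent to (i) $\fast \in \bmo(0, |Q_0|)$ together with (ii) a sub-logarithmic growth (or vanishing John--Nirenberg) condition at the origin. Part (i) is the content of the BMO-boundedness result in \cite{BDG11}. The main task is thus to upgrade this boundedness to the vanishing growth condition in (ii), using the VMO hypothesis on $f$.

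To this end, I would adapt the covering argument that underlies the BMO-bound in \cite{BDG11}. Given small $t > 0$, the relevant quantity (say $\fast(t) - \fast(2t)$, or the analogous dyadic mean oscillation of $\fast$ on an interval of the form $(t, 2t)$) is controlled by the oscillation of $f$ across the superlevel set $\{f > \fast(2t)\}$, whose Lebesgue measure is at most $2t$. The key observation is that for small $t$, one can cover this set by a finite collection of subcubes $Q_j \subset Q_0$ of diameter comparable to $t^{1/n}$; consequently, the BMO norm $\|f\|_\bmo$ that appears in the \cite{BDG11} estimate can be replaced by the local VMO modulus
\[
\omega_f(r) := \sup\Bigl\{ \tfrac{1}{|Q|}\int_Q |f - f_Q|\, dx : Q \subset Q_0,\ \diam{Q} \le r \Bigr\},
\]
evaluated at the scale $r = C_n t^{1/n}$. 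Since $f \in \vmo(Q_0)$, this modulus vanishes as $r \to 0^+$, which delivers the desired sub-logarithmic growth of $\fast$ at the origin.

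A secondary point to verify is that $\fast$ is continuous on $(0, |Q_0|]$, so that the mean oscillation of $\fast$ over short intervals $I \subset [\eta, |Q_0|]$ bounded away from the origin also vanishes (via uniform continuity on compact subintervals). Continuity of $\fast$ is equivalent to the absence of a gap $(s_1, s_2)$ in the essential range of $f$, and I would rule out such a gap by observing that it would force arbitrarily small cubes straddling the two levels to carry mean oscillation bounded below by a fixed positive multiple of $s_2 - s_1$, contradicting $\omega_f(r) \to 0$.

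The main obstacle is the scale-localization of the BMO estimate. The argument in \cite{BDG11} was designed to yield a global bound $\|\fast\|_\bmo \le C_n \|f\|_\bmo$ and does not automatically produce covering cubes that shrink with $t$. Refining the covering so that the cubes have diameter controlled by $t^{1/n}$, while preserving the linear dependence of the resulting estimate on $\omega_f$ with constants depending only on the dimension, is the delicate technical step. Once this scale-localized estimate is in hand, combining (i) and (ii) via the one-dimensional characterization of VMO for monotone functions finishes the proof.
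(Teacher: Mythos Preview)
Your overall structure matches the paper's: reduce to a one-dimensional characterization of $\vmo$ for monotone functions (continuity on $(0,|Q_0|)$ plus vanishing oscillation near the origin), establish continuity of $\fast$ by ruling out jumps via a density argument (this is exactly the paper's Lemma~\ref{jump}), and then prove the vanishing condition at $0$. The BMO bound from \cite{BDG11} gives part (i), as you say.

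The substantive difference is in how you attack the vanishing at the origin. You propose to open up the covering argument behind the BMO bound and argue that the Calder\'on--Zygmund cubes covering $\{|f|>\fast(2t)\}$ can be taken with diameter $\lesssim t^{1/n}$ (since their total measure is $\le 2t$), thereby replacing $\|f\|_\bmo$ by the local modulus $\omega_f(C_n t^{1/n})$. The size constraint on individual cubes does follow from the total-measure bound, so the plan is plausible; but, as you yourself flag, carrying the full estimate through while touching only small cubes is delicate. In particular, bounding $\cO(\fast,J)$ for an \emph{arbitrary} $J=(a,b)\subset(0,\delta)$ (not just dyadic intervals) from the localized quantity $f^{**}(t)-\fast(t)$ tends to introduce a factor $b/(b-a)$ that has to be dealt with separately, and your plan does not say how.

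The paper sidesteps this entirely by using the BMO bound as a black box, applied not to $f$ but to the truncation $(f-\beta)_+$ with $\beta=\fast(b)$. Since $\fast\ge\beta$ on $J$, one has $\cO(\fast,J)=\cO((\fast-\beta)_+,J)\le \|(\fast-\beta)_+\|_\bmo\le c\,\|(f-\beta)_+\|_{\BMO{\cS}{}}$. Now pick a near-extremal shape $S$ for $(f-\beta)_+$ and split: if $|S|<\eta$ then $\cO((f-\beta)_+,S)\le\cO(f,S)\le\sup_{|S|<\eta}\cO(f,S)$ by Property~\ref{O-trunc}; if $|S|\ge\eta$ then $\cO((f-\beta)_+,S)\le\frac{2}{|S|}\int_\Omega(f-\beta)_+\le\frac{2}{\eta}\int_0^\delta\fast$. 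Choosing first $\eta$ (via the VMO hypothesis) and then $\delta$ (via integrability of $\fast$ at $0$) makes both alternatives small. This dichotomy is shorter, avoids re-entering the \cite{BDG11} proof, and is basis-independent: it works for any $\cS$ satisfying the density condition and the BMO bound, whereas your scale-localization is tied to the cube-based Calder\'on--Zygmund covering.
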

	
	\noindent It turns out that functions in VMO on an unbounded domain are not automatically rearrangeable (see Example~\ref{ex-trees}). Nonetheless, on $\R^n$, we have the following anaologue of the above theorem:
	$$
	f\in \vmo(\R^n)\;\text{rearrangeable}\;\implies f^\ast\in\vmo(\R_+).
	$$
	See Theorem \ref{thm-vmobound} for this result and in a more general context. 
	
	Our second result shows that the decreasing rearrangement is continuous at all points in VMO, in the following sense. 
	
	\begin{introtheorem}[Continuity]
\label{introcont}
		Let $Q_0\subset\R^n$ be a cube. If $f_k$, $k\in\N$, are in $\bmo(Q_0)$ and $f$ is in $\vmo(Q_0)$ with $f_k \to f$ in $\bmo(Q_0)$ such that the means $\fint_{Q_0}f_k$ converge to $\fint_{Q_0}f$, then $f_k^*\to f^*$ in $\bmo(0,|Q_0|)$.
	\end{introtheorem}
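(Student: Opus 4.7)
The plan is to combine three ingredients: $L^1$ convergence of $f_k^*$ to $f^*$, the fact from Theorem~\ref{introbound} that $f^*\in\vmo(0,|Q_0|)$, and a scale-localized form of the boundedness estimate, controlling the mean oscillation of a rearrangement on a short interval by the mean oscillations of the original function on small cubes.

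First I would note that the hypotheses imply $f_k\to f$ in $L^1(Q_0)$, since
\[
\fint_{Q_0} |f_k-f| \;\le\; \fint_{Q_0} \bigl|(f_k-f) - (f_k-f)_{Q_0}\bigr| + \bigl|(f_k)_{Q_0}-f_{Q_0}\bigr| \;\le\; \|f_k-f\|_{\bmo(Q_0)} + \bigl|(f_k)_{Q_0}-f_{Q_0}\bigr|,
\]
and both terms tend to zero. Nonexpansiveness of the decreasing rearrangement in $L^1$ then gives $f_k^*\to f^*$ in $L^1(0,|Q_0|)$.

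Next, given $\eps>0$, I would split the intervals $I\subset(0,|Q_0|)$ at a threshold $\delta>0$. For $|I|\ge\delta$,
\[
\fint_I \bigl|(f_k^*-f^*) - (f_k^*-f^*)_I\bigr| \;\le\; 2\fint_I |f_k^*-f^*| \;\le\; \tfrac{2}{\delta}\|f_k^*-f^*\|_{L^1},
\]
which is smaller than $\eps$ for $k$ large. For $|I|<\delta$, I use the triangle inequality
\[
\fint_I \bigl|(f_k^*-f^*) - (f_k^*-f^*)_I\bigr| \;\le\; \fint_I \bigl|f_k^*-(f_k^*)_I\bigr| + \fint_I \bigl|f^*-(f^*)_I\bigr|.
\]
The second summand tends to zero uniformly in $I$ as $\delta\to 0$, by Theorem~\ref{introbound}. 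For the first summand, I would invoke a local form of the estimate behind Theorem~\ref{introbound}, controlling $\fint_I|f_k^*-(f_k^*)_I|$ by a constant times the supremum of $\fint_J|f_k-(f_k)_J|$ over cubes $J\subset Q_0$ whose size shrinks with $|I|$. Writing
\[
\fint_J |f_k-(f_k)_J| \;\le\; \fint_J |f-f_J| + \|f_k-f\|_{\bmo(Q_0)},
\]
this supremum can be made $<\eps$ by first choosing $\delta$ small (using $f\in\vmo$) and then $k$ large (using BMO convergence).

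The main obstacle is the scale-localized bound invoked in the previous paragraph: one needs that the VMO modulus $\omega_f(r)=\sup_{|J|\le r}\fint_J|f-f_J|$ controls $\sup_{|I|\le r}\fint_I|f^*-(f^*)_I|$, rather than merely the global inequality $\|f^*\|_{\bmo}\le C\|f\|_{\bmo}$. Such an estimate is precisely what enables Theorem~\ref{introbound} to propagate the VMO property under rearrangement, so its proof should supply it --- for instance via a pointwise rearrangement inequality in the spirit of Bennett--DeVore--Sharpley or Korenovskii. With this ingredient in hand, the splitting above completes the proof; the remaining routine step is to absorb the (finitely many) small-$k$ indices into the initial choice of $\delta$.
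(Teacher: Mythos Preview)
Your reduction to $L^1$ convergence of the rearrangements and your handling of long intervals are fine and match the paper. The gap is in the treatment of short intervals. You rely on a scale-localized bound of the form $\cO(f_k^*,I)\le C\,\omega_{f_k}(\phi(|I|))$ with $\phi(r)\to 0$ as $r\to 0$, and assert that the proof of Theorem~\ref{introbound} supplies it. It does not. The argument behind Theorem~\ref{introbound} yields an estimate of this flavour \emph{only for intervals $J\subset(0,\delta)$ close to the origin}, and even there it is a two-parameter bound
\[
\cO(f^*,J)\ \le\ \max\Bigl\{2c\,\sup_{|S|<\eta}\cO(f,S),\ \tfrac{4c}{\eta}\int_0^\delta f^*\Bigr\},
\]
which also uses the integrability of $f^*$ at $0$. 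For short $J$ away from the origin, the proof of Theorem~\ref{introbound} gives no quantitative control in terms of $\omega_f$ at all: it argues that $f^*$ has no jumps when $f\in\vmo$, hence is uniformly continuous on $[\delta/2,|Q_0|)$, a purely qualitative statement. The Bennett--DeVore--Sharpley inequality you cite does not help either: it bounds $f^{**}(t)-f^*(t)$ by $(f^\#)^*(ct)$ or by $\|f\|_\bmo$, neither of which is the small-scale modulus $\omega_f(r)$. So your triangle-inequality bound $\cO(f_k^*,I)+\cO(f^*,I)$ is not controlled, uniformly in $k$, for short $I$ sitting away from the origin.

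The paper closes this differently: it splits by the \emph{location} of $I$, not its length. From $L^1$ convergence one extracts a subsequence with $f_k^*\to f^*$ pointwise a.e.; since each $f_k^*$ is monotone and the limit $f^*$ is continuous, a lemma of P\'olya upgrades this to uniform convergence on $[\delta/2,|Q_0|)$. Thus for $J\subset[\delta/2,|Q_0|)$ one bounds $\sup_J|f_k^*-f^*|$ directly, while for $J\subset(0,\delta)$ one applies the two-parameter estimate above to both $f^*$ and $f_k^*$, using uniform integrability of $\{f_k^*\}$ for the integral term and $\omega_{f_k}(\eta)\le\omega_f(\eta)+\|f_k-f\|_{\bmo}$ for the first. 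Intervals straddling $\delta/2$ and $\delta$ are handled by a short splitting argument. If you want to rescue your length-based splitting, you would need to prove the scale-localized modulus inequality independently; that is a separate (and nontrivial) statement, not a byproduct of the boundedness theorem.
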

	
	\noindent Similar continuity results hold on $\R^n$ and in more general settings -- see Theorem \ref{thm-vmo-infinite}. The corresponding conclusions (see Corollaries \ref{corsymbound} and \ref{corsymcont}) also hold for the symmetric decreasing 
	rearrangement.
	
	
	
	Both theorems will be proved more generally for functions on a domain $\Omega\subset\Rn$ that have vanishing mean oscillation with respect 
to a suitable basis $\cS$ in $\Omega$.  The VMO space in this 
setting is introduced in Section \ref{vmosection}.
	
	
	
	\section{{\bf Preliminaries}}
	\label{sec:prelim}
	
	
	\subsection{Rearrangements}
	To define the decreasing rearrangement (also called  the nonincreasing rearrangement) and the symmetric decreasing rearrangement, we will restrict ourselves to functions which we call {\em rearrangeable}.  For
	a measurable function $f$ on a domain $\Omega\subset\R^n$, this means that $\mu_f(\alpha)\rightarrow{0}$ as $\alpha\rightarrow\infty$, where $\mu_f$ is the distribution function of $f$. Recall that for $\alpha\geq{0}$, $\mu_f(\alpha) = |E_\alpha(f)|$, the Lebesgue measure of the level set
	$$E_\alpha(f):=\{x\in \Omega:|f(x)|>\alpha \}\,.
	$$
	
	The {\em decreasing rearrangement} is the right-continuous generalized inverse of the distribution function, given by
	\begin{equation}
		\label{eq-fast-alt}
		f^*(s)= \mu_{(\mu_f)}(s)=|\{\alpha\ge 0: \mu_f(\alpha)>s\}|\,,
	\end{equation}
see Figure~\ref{fig-fstar}.
	If the domain has finite measure $|\Omega|$, then $\fast(s)=0$ for all $s\geq|\Omega|$, so we consider $\fast$ as a function on $(0,|\Omega|)$.
	
	
	
	
	\begin{figure}[t] \label{fig-fstar}
	\includegraphics[width=0.8 \linewidth]{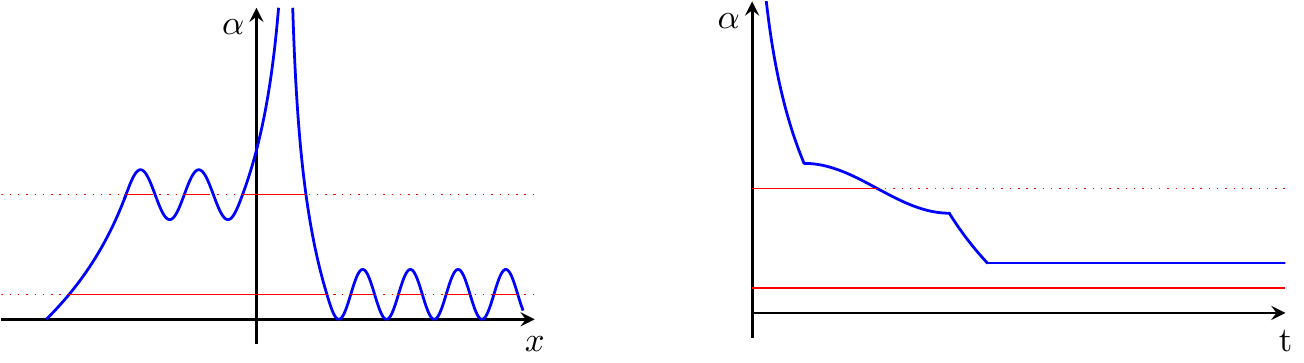}
	\caption{\small A nonnegative function $f\in \bmo(\R)$ 
and its decreasing rearrangement $\fast\in\bmo(\R_+)$.
}
	\end{figure}
	
	The following standard properties of the decreasing rearrangement will be used throughout this paper.
	
	\begin{property}{R1}
		\label{R-equi} {\em (Equimeasurability.)}\ 
		For all $\alpha\geq{0}$, $\mu_f(\alpha)=\mu_{\fast}(\alpha)$. 
	\end{property}
	
This property uniquely characterizes $f^*$ among the right-continuous decreasing functions on $(0,|\Omega|)$.
	
	
	\begin{property}{R2}
		\label{R-Lp}  The decreasing rearrangement $f\mapsto{\fast}$ is 
		norm-preserving from $L^p(\Omega)$ to $L^p(0,|\Omega|)$ for 
		all $1\leq{p}\leq\infty$. Furthermore, it is non-expansive and, 
		therefore, continuous.
	\end{property}
	
	\begin{property}{R3}
		\label{R-ptwise} If $f_k$, $k\in \N$, and $f$ are nonnegative rearrangeable functions on $\Omega$ satisfying $f_k\uparrow f$ pointwise, then $\fast_k\uparrow \fast$ pointwise on $(0,|\Omega|)$.
	\end{property}
	
	To see this, note first that for any $\alpha>0$, the level sets of $f$ satisfy $E_\alpha(f)=\bigcup_{k\ge 1}E_\alpha(f_k)$. By continuity of the measure, $\mu_{f_k}(\alpha)\uparrow \mu_f(\alpha)$ for all $\alpha>0$.  
	By Eq.~\eqref{eq-fast-alt}, $\fast$ and $\fast_k$ can be represented as the distribution functions of $\mu_f$ and $\mu_{f_k}$, respectively.
	Therefore, by the same argument as above, $\fast_k(s)\uparrow \fast(s)$ for all $s>0$.
	
	
	\begin{property}{R4}
		\label{R-trunc} {\em (Truncation.)}\ 
For any nonnegative rearrangeable function $f$ and any 
$0\leq \alpha<\beta\leq\infty$,
		$$
		\left(\min\{\max\{f,\alpha\},\beta\}\right)^*= \min\{\max\{\fast,\alpha\},\beta\}\,.
		$$
	\end{property}
	
	For a rearrangeable function $f$ on $\R^n$, we define its {\em symmetric decreasing rearrangement} $Sf$ by
	\begin{equation}\label{symmrea}
		Sf(x)=\fast(\omega_n|x|^n)\,,\qquad x\in\R^n\setminus \{0\}\,,
	\end{equation}
	where $\omega_n$ is the volume of the unit ball in $\Rn$.
	The symmetric decreasing rearrangement, as a map from functions on $\Rn$ to functions on $\Rn$, inherits Properties~{\ref{R-equi}}-{\ref{R-trunc}}.

	For later use, we record the behaviour of distribution functions and rearrangements under scaling, dilation, and translation. If $\widetilde f(x)= a f(b^{-1}(x-x_0))$ for some positive constants $a,  b$ and some $x_0\in\R^n$, then
	\begin{equation}
		\label{eq-trans-dil}
		\mu_{\widetilde f}(\alpha) = \bigl|\bigl\{x\in\Omega: |f(b^{-1}(x-x_0))|>a^{-1}\alpha\bigr\}\bigr|
		= b^n\mu_{f}(a^{-1}\alpha)\,.
	\end{equation}
	Since $f$, $f^*$, and $Sf$ are equimeasurable, by reversing the above calculation, we get 
	$$
	(\widetilde f)^*(s) =a f^*(b^{-n}s)\quad \text{and}\quad S \widetilde f(x) =a (Sf)(b^{-1}x)\,.
	$$
	
	More details  on the decreasing rearrangement can be found in \cite{sw}; see \cite{Baernstein}
	for the symmetric decreasing rearrangement.
	
	
	\subsection{Mean oscillation}
	
	John and Nirenberg~\cite{jn} introduced  functions of bounded mean oscillation over cubes in $\Rn$ with sides parallel to the axes.   We follow the terminology used in \cite{dg} in order to define mean oscillation over more general sets than cubes or balls. A {\em basis} of shapes in a domain $\Omega\subset\R^n$ is a collection $\cS$ of open sets $S\subset\Omega$,  $0<|S|<\infty$, forming a cover of $\Omega$. We will use $\cI$ to denote the basis of finite open intervals in $\R$; in $\Rn$, we denote by $\cB$ the basis of Euclidean balls, by $\cQ$ the basis of cubes with sides parallel to the axes, and by $\cR$ the rectangles with sides parallel to the axes.
	
	Let $f$ be a real-valued function with $f \in L^1(S)$ for every $S \in \cS$.  Define the {\em mean oscillation} of $f$ on a shape $S\in\cS$ by
	$$
	\cO(f,S):=\fint_{S}\!|f-f_S|\,,
	$$
	where $|S|$ denotes the measure of $S$ and $f_S := \fint_S f$ is the average of $f$ over $S$. 
	
	We will use the following properties of mean oscillation of an integrable function over a given shape $S \in \cS$. These {\em shapewise} identities and inequalities are proved in~\cite{dg}.
	
	\begin{property}{O1}
		\label{O-const}
		For any constant $\alpha$, 
		$\cO(f+\alpha,S)=\cO(f,S)$.
	\end{property}
	
	\begin{property}{O2}
		\label{O-alt} 
		Denoting $y_+=\max(y,0)$, 
		$$
		\cO(f,S)=\frac{2}{|S|}\int_{S}(f-f_S)_+\, .
		$$
	\end{property}
	\noindent When $f=\Chi_{E}$ for a measurable set $E$, then 
	\begin{equation} \label{eq-char-func}
		\cO(\Chi_{E}, S) = 2\rho(E,S)(1-\rho(E,S))\,,\;
\text{where}\; \rho(E,S):=\frac{|E \cap S|}{|S|}\,.
	\end{equation}
	
	\begin{property}{O3} 
		\label{O-abs}
		$\cO(|f|,S)\leq 2\cO(f,S)$. 
	\end{property}
	
	\begin{property}{O4}
		\label{O-median} 
		$$
		\cO(f,S) \leq 2 \inf_{\alpha}\fint_{S}\!|f-\alpha|=2 \fint_{S}\!|f-m|\,,
		$$
		where the infimum is taken over all constants $\alpha$, and $m$ is a median of $f$ on $S$,
		defined by the property that $|\{x\in S:f(x)>m\}|\leq\frac{1}{2}|S|$ and $|\{x\in S:f(x)<m\}|\leq\frac{1}{2}|S|$.
	\end{property}
	
	\begin{property}{O5} \label{O-2int}
		$$
		\cO(f,S) \leq \fint_S\fint_S \!|f(x) - f(y)| dx dy \leq 2\cO(f,S)\,.
		$$
	\end{property}
	
	\begin{property}{O6} \label{O-trunc} 
		For 
		$-\infty\leq \alpha<\beta\leq\infty$, the truncation $\ftilde=\min\{\max\{f,\alpha\},\beta\}$ satisfies
		$$
		\cO(\ftilde,S)\leq\cO(f,S)\,.
		$$
	\end{property}
	
	We will also frequently use the following comparison principle, which can be obtained by applying Property~{\ref{O-alt}} to both sides.
	
	\begin{property}{O7}
		\label{O-subset} For any  pair of shapes $S\subset\widetilde S$,
		$$
		\cO(f,S)\leq \frac{|\widetilde{S}|}{|S|}\, \cO(f,\widetilde S)\,.
		$$
	\end{property}
	
	If $\cS$ and $\widetilde{\cS}$ are two bases of shapes in $\Omega$ such that for every $S\in\cS$ there exists $\widetilde{S}\in\widetilde{\cS}$ with $S\subset\widetilde{S}$ and $|\widetilde{S}|\leq c|S|$, and for every $\widetilde{S}\in\widetilde{\cS}$ there exists $S\in\cS$ with $\widetilde{S}\subset S$ and $|S|\leq \widetilde{c}|\widetilde{S}|$, for some constants $c,\widetilde{c}>0$, then we say that
	$\cS$ is {\em equivalent} to $\widetilde{\cS}$, written $\cS\approx\widetilde{\cS}$.
	%
	
	
	
	\begin{definition}
		A function $f$ has {\em bounded mean oscillation} with respect to a basis $\cS$, denoted $f \in \BMO{\cS}{}(\Omega)$, if $f\in L^1(S)$ for all $S\in\cS$ and
		\begin{equation}
			\label{eq-bmo}
			\|f\|_{\BMO{\cS}{}}:=\sup_{S\in\cS} \cO(f,S)<\infty\, .
		\end{equation}
	\end{definition}
	When $\cS=\cQ$, the basis of cubes, $\BMO{\cS}{}(\Omega)$  will be simply denoted by $\bmo(\Omega)$. Many interesting properties of $\BMO{\cS}{}$ for $\cS=\cQ$ and $\cR$ can be found in ~\cite{ko2}.

	\begin{remark}\label{integrability}
	Functions in $\BMO{\cS}{}(\Omega)$ are locally integrable in $\Omega$. Note, however, that shapes need not be compactly contained in $\Omega$. In particular, functions in $\BMO{}{}(\R_+)$ are also integrable at the origin in the sense of being integrable on $(0,b)$ for any $b<\infty$.
	\end{remark}
	
	Property~{\ref{O-const}} tells us that Eq.~\eqref{eq-bmo} defines a seminorm that vanishes on constant functions. It is therefore natural to consider $\BMO{\cS}{}(\Omega)$ modulo constants, and it was shown in \cite{dg} that this gives a Banach space. When dealing with rearrangements, however, working modulo constants will not serve our purpose, and we will just consider $\BMO{\cS}{}(\Omega)$ as a linear space with a seminorm. Convergence in $\BMO{\cS}{}(\Omega)$ will always mean convergence with respect to this seminorm; i.e. 
	$$
	f_k\rightarrow f\ \text{in}\ \BMO{\cS}{}(\Omega)\quad\Longleftrightarrow\quad \lim \|f_k-f\|_{\BMO{\cS}{}}=0\,.
	$$
	
	We collect here some properties of BMO that will be used subsequently.
	
	\begin{property}{B1}
		\label{B-equiv} If $\cS\approx\widetilde{\cS}$, then $f\in\BMO{\cS}{}(\R^n)$ if and only if $f\in\BMO{\widetilde{\cS}}{}(\R^n)$, with
		$$
		\widetilde{c}^{-1} \|f\|_{\BMO{\widetilde{\cS}}{}}
		\le \|f\|_{\BMO{\cS}{}}\le {c}
		\|f\|_{\BMO{\widetilde{\cS}}{}}\,.
		$$ 
		This follows from Property~{\ref{O-subset}}.
	\end{property}
	
	\begin{property}{B2} \label{B-L1} 
		For any shape $S \in \cS$, $\|f - f_S\|_{L^1(S)}\leq |S| \|f\|_{\BMO{\cS}{}}.$ 
	\end{property}
	
	\begin{property}{B3} \label{B-trans-dil} 
		On $\Omega=\R^n$, if $\widetilde f(x)= f(b^{-1}(x-x_0))$ for some $b>0$ and $x_0\in\R^n$, then 
		$$
		\|\widetilde f\|_{\BMO{}{}}= \|f\|_{\BMO{}{}}\,.
		$$
	\end{property}
	
	
	
	\begin{property}{B4} \label{B-support} 
		Suppose $f$ is nonnegative and $S \in \cS$. Then
		$$
			\cO(f,S)\leq 2\left(\inf\frac{|S'|}{|S|}\right) \|f\|_{\BMO{\cS}{}}\,,
		$$
		where the infimum is taken over all 
shapes $S'$ with $S' \supset E_0(f) \cap S$ and 
$|S'|\geq 2 |E_0(f)\cap S'|$. 
On such a shape $S'$, we have that $m = 0$ is a median of $f$, and by Properties~{\ref{O-alt}} and~{\ref{O-median}},
		\begin{equation}\label{eq-B-support2}
		\cO(f,S) \leq \frac{2}{|S|} \int_{S}\!f
		\leq  \frac{2}{|S|} \int_{S'}\!|f-m| 
		= \frac{2|S'|}{|S|} \fint_{S'}\!|f-m|
		\leq \frac{2|S'|}{|S|} \cO(f,S')\,.
		\end{equation}
	\end{property} 
	
	In general, this is not sharp. For intervals on $\R$, if $E_0(f)$ is an interval $I$ then for any interval $S$ with $|I \cap S| < |S|$, the minimum is attained by taking an interval $S'$ of length $2|I\cap S|$, and we get the estimate
	\begin{equation}\label{eq-B-support3}
	\cO(f,S)\leq \frac{4|I \cap S|}{|S|} \|f\|_{\BMO{}{}}\,.
	\end{equation}
	This is sharp: for indicator functions, the norm of $\|\Chi_{E}\|_{\bmo(\R)}=\frac{1}{2}$ and $\rho(E,S)$ can be taken arbitrarily close to zero in Eq. \eqref{eq-char-func}.

	
	\subsection{Vanishing mean oscillation}\label{vmosection}
	
	An important subspace of BMO is the space of functions of vanishing mean oscillation, VMO, originally defined by Sarason on $\R$. We generalize the definition here to a basis $\cS$ of shapes in a domain $\Omega\subset\Rn$.
	\begin{definition} \label{vmodef}
		We say that a function $f\in\BMO{\cS}{}(\Omega)$ is in $\VMO{\cS}{}(\Omega)$ if
		\begin{equation}\label{vmo}
			\lim_{\delta\rightarrow{0}^+}\sup_{|S|\leq \delta}\cO(f,S)=0\,,
		\end{equation}
		where the supremum is taken over all shapes $S\in\cS$ of measure at most $\delta$. 
	\end{definition}
	
	In what follows, we want to exclude the possibility that Eq.~\eqref{vmo} holds vacuously because there are no shapes of arbitrarily small measure (an example is the basis of cubes with sidelength bounded below by some constant). This is implicit in the density condition~\eqref{eq-meas-cont}.
	
	By Properties~{\ref{O-subset}} and~{\ref{B-equiv}}, equivalent bases define the same VMO-space, with equivalent BMO-seminorms. Again, the notation $\VMO{}{}(\Omega)$ will be reserved for the case $\cS=\cQ$. For bases equivalent to cubes, having vanishing diameter is the same as having vanishing measure, hence the supremum in Definition~\ref{vmodef} can instead be taken over all shapes $S$ of diameter at most $\delta$. For general bases, however, vanishing diameter is strictly stronger than vanishing measure. Consider the basis of rectangles, $\cR$, in which a sequence of rectangles of constant diameter can have measure tending to zero. 
	
	
	For nice domains $\Omega$, $\vmo(\Omega)$ is the closure, in the $\bmo$-seminorm, of the set of uniformly continuous functions in $\bmo(\Omega)$ (see \cite{bd}). General functions in $\vmo$, though, need be neither continuous nor bounded; an example is $(-\log |x|)_+^p$ for $0 < p < 1$. On $\Omega  = \Rn$, $\vmo$ can also be characterized as the subset of $\bmo(\Rn)$ on which translation is continuous. In the case when $\Omega$ is unbounded, note that there is a strictly smaller $\vmo$-space, sometimes denoted $\cmo$ (see~\cite{bo,bd,uch}), in which additional vanishing mean oscillation conditions are required as the cube or its sidelength go to infinity.
	
	For any choice of basis, we have that $\VMO{\cS}{}(\Omega)$ is a closed 
	subspace of $\BMO{\cS}{}(\Omega)$. To see this, let $f_k \ra f$ in $\BMO{\cS}{}(\Omega)$, where $f_k \in \VMO{\cS}{}(\Omega)$ for all $k$. Then $f \in \VMO{\cS}{}(\Omega)$ since
	$$
	\sup_{|S|\leq \delta}\cO(f,S) \leq \sup_{|S|\leq \delta}\cO(f_k,S) 
	+ \|f - f_k\|_{\BMO{\cS}{}(\Omega)}
	$$
	can be made arbitrarily small by choosing $k$ sufficiently large and $\delta$ sufficiently small.
	

	
	\subsection{Rearrangement meets mean oscillation}
	
	To deal with the decreasing rearrangement for functions in BMO, we need to establish several conventions.  First, as noted above, while mean oscillation is invariant under the addition of constants, this is not true for the rearrangement. Therefore, the mapping from $f$ to $\fast$ is not between equivalence classes modulo constants, but between individual functions. 
	
	A second issue relates to rearrangeability. When defining the decreasing rearrangement for functions in spaces like $L^p$ (see Property~{\ref{R-Lp}}) or weak-$L^p$, the rearrangeability condition is automatically satisfied. This is not true for functions in BMO, which may fail to be rearrangeable. Nevertheless, as functions in BMO are locally integrable, hence finite almost everywhere, we will have that $f\in\bmo$ is rearrangeable provided that $\mu_f(\alpha) < \infty$ for some $\alpha \geq 0$. This property is preserved when we add constants.
	
	If $f$ were not rearrangeable, then defining $\fast$ would lead to $\fast\equiv\infty$. This is the case, for instance, for $-\log|x|$, the prototypical unbounded function in $\bmo(\Rn)$. On the other hand, the positive part $(-\log|x|)_+$ is rearrangeable, as is any other $\bmo$-function of compact support, since such functions are integrable.

	
	\section{\bf Some examples}
	\label{sec:examples}
	
	\subsection{Rearrangeable functions}
	To go beyond the case of bounded functions, functions of compact 
	support, and integrable functions, we consider some examples 
	in $\bmo(\R)$ defined, pointwise, as series.
	Fix a nonnegative, nonconstant integrable function $g$
	in $\bmo(\R)$ vanishing outside $I:=(-1,1)$. Define
	\begin{equation}
		\label{eq-sum} 
		f = \displaystyle{\sum_{k = 1}^\infty g_k}\,,
	\end{equation}
	where 
	each $g_k$ is obtained from $g$ by scaling, dilation and  translation; that is,
	\begin{equation}
		\label{eq-tree}
		g_k(x) = a_k g\bigl(b_k^{-1}(x - n_k)\bigr)\,,
	\end{equation}
	for some sequences $\{a_k\}$, $\{b_k\}$, $\{n_k\}$ of positive numbers. Here $\{a_k\}$ is assumed to be bounded,
	$n_1\ge b_1$, and $n_k\uparrow \infty$. From Property~{\ref{B-trans-dil}} it follows that $\|g_k\|_{\bmo}=a_k\|g\|_{\bmo}$. Note that
	$g_k$ vanishes outside $I_k := (n_k-b_k, n_k+ b_k)$, 
	and we further assume that consecutive intervals are {\em
		well-spaced}, namely
	\begin{equation}
		\label{eq-well-spaced}n_{k + 1}-n_k \ge 9(b_k+b_{k+1}),
	\end{equation}
	so that the larger intervals $\widetilde{I}_k:= (n_k-9b_k, n_k+9b_k)$  
	are disjoint. This implies that if an  interval $J$ intersects
	$I_k$ and an adjacent interval $I_{k\pm 1}$, then 
$|J \cap \tilde I_k| \geq 8b_k\ge 4|J\cap I_k|$.

The series in Eq.~\eqref{eq-sum} converges 
	pointwise and in $\Loneloc(\R)$, and
	$$
	\int_J f \le 
\|g\|_{L^1}  \sum_{k: J\cap I_k\ne \emptyset}
	a_k b_k 
	$$
	It converges in $L^1(\R)$ if and only if the sequence $\{a_kb_k\}$
	is summable; in that case,
	$$||f||_{L^1} = \|g\|_{L^1} \sum_{k\ge 1} a_k b_k\,.$$
	We claim that $f\in\bmo$, with
	\begin{equation}
		\label{series-bound}
		\|f\|_{\BMO{}{}} = a\, \|g\|_{\BMO{}{}}\,,
	\end{equation}
	where $a:=\sup a_k$. In particular, applying this to the tail of 
	the series, we see that the convergence of the series in 
	the BMO-seminorm is equivalent to $a_k \to 0$.
	
	To prove Eq.~\eqref{series-bound},  recalling that $\|g_k\|_{\bmo}=a_k\|g\|_{\bmo}$ and the definition of $a$,   one direction reduces to showing
 	$\|f\|_{\BMO{}{}}\ge \,  \|g_k\|_{\BMO{}{}}$ for each $k$.  We fix $k$ and estimate the oscillation of $g_k$ on an interval $J$. If $J$ intersects only $I_{k}$,
	then $\cO(g_{k}, J) = \cO(f,J)$.  On the other hand, as was already pointed out, by 
	the well-spacing assumption Eq.~\eqref{eq-well-spaced}, if $J$ is sufficiently long to intersect
	 $I_k$ and one of its neighbors, it must satisfy $|J| \geq 8b_k\ge 4|J\cap I_k|$.  We now apply the calculation in Eq.~\eqref{eq-B-support2} 
to the function $g_k$ with $S = J$ and an interval $S'$ whose half consists of $J \cap I_k$, noting that $|S'| = 2|J\cap I_k| \leq 4b_k$ implies $S'$ does not intersect any neighbor of $I_k$.  Thus
	 $$\cO(g_{k}, J) \le \frac{2|S'|}{|J|}\cO(g_{k}, S') \le \cO(g_{k}, S') = \cO(f, S')$$
	 and we have shown that $\cO(g_{k}, J) \leq \|f\|_{\BMO{}{}}$.

         The other direction follows by similar arguments applied this time to
	the mean oscillation of $f$ on an interval $J$.  Again we have that 
	if $J$ intersects exactly one interval $I_{k}$,
	then $\cO(f,J)=\cO(g_{k}, J)\le a_{k}\|g\|_\bmo$.  Otherwise, by the subadditivity of oscillation and Eq.~\eqref{eq-B-support3}, noting once more that $|J| \ge 4|J\cap I_k|$ for each $k$ for which $I_k\cap J\ne \emptyset$, we write
	$$
	\cO(f,J) 
	\le \sum_{k: I_k\cap J\ne \emptyset} \cO(g_k,J)\le 
	\sum_{k: I_k\cap J\ne \emptyset} 4 \frac{|I_k\cap J|}{|J|}
	\|g_k\|_{\bmo}
	\le  \frac{a\, 
	\|g\|_\bmo}{|J|}\sum_{k: I_k\cap J\ne \emptyset} 4 |I_k\cap J|\,.
	$$
A final application of the consequences of well-spacing, namely that $\widetilde{I}_k:= (n_k-9b_k, n_k+9b_k)$ are disjoint,
	 gives
	$$
	 \sum_{k: I_k\cap J\ne \emptyset}4 |I_k \cap J|\,  \leq \sum_{k: I_k\cap J\ne \emptyset}|\widetilde I_k \cap J| \le|J|.
	$$
	
	To check whether $f$ is rearrangeable, we compute 
	its distribution function.
	By disjointness of the supports of the $\{g_k\}$ and Eq.~\eqref{eq-trans-dil},
	$$
	\mu_f(\alpha)
	= \sum_{k=1}^\infty \mu_{g_k}(\alpha)
	= \sum_{k = 1}^\infty b_k \mu_g (a_k^{-1}\alpha)
	$$
	for any $\alpha>0$.
	In particular, recalling that  
	$a:=\sup a_k$ and assuming $b:= \sum_{k\ge 1 } b_k<\infty$, we have
	$\mu_f(\alpha)
	\le b \mu_g \bigl(a^{-1}\alpha\bigr)$
	and $f^*(s)\le a g(b^{-1}s)$.
	Even when $\{b_k\}$ is not summable, $f$ may be rearrangeable 
	provided that $\{a_k\}$ decays sufficiently quickly.
	
	To understand this phenomenon, we specialize to the function
	$g(x)=(-\log{|x|})_+$, which satisfies
	$g_{I}=1$, and $\|g\|_{\bmo}=2/e$.
	Its mean oscillation is maximized on any symmetric subinterval
	of $I=(-1,1)$.
	Its distribution function and decreasing rearrangement are given by
	$$
	\mu_g(\alpha)= 2e^{-\alpha}\,,\qquad g^*(s) = 
	(-\log s+\log 2)_+\,.
	$$
	
	\begin{example}
		\label{ex-trees}
		Define $f$ by Eqs.~\eqref{eq-sum} and~\eqref{eq-tree},
		where the factors $a_k$ and $b_k$ will be further specified below.
		For $\{n_k\}$ we choose
		any sequence with $n_1\ge b_1$ and satisfying Eq.~\eqref{eq-well-spaced}.
		We consider three scenarios.
		\begin{itemize}
			\item[(a)]
			{\em $f$ is rearrangeable, but $\sum g_k$
				diverges in BMO}. \ 
			Taking $a_k = 1$ and $b_k = e^{-k}$ in Eq.~\eqref{eq-sum}, 
			we obtain for the distribution
			function and decreasing rearrangement of $f$:
			$$ 
			\mu_f(\alpha) = 2e^{-\alpha}
			\sum_{k = 1}^\infty   
			e^{-k} = 2be^{-\alpha}\,,
			\qquad f^*(s)= \bigl(-\log s + \log (2b)\bigr)_+\,,
			$$
			where $b=\frac{1}{e-1}$.  Then
			$\|f^*\|_\bmo = \|g\|_\bmo=\|f\|_\bmo$. Since
			$a_k\not\to 0$, the series from Eq.~\eqref{eq-sum}
			diverges in BMO.
			In fact, $f$ cannot be approximated in
			BMO by compactly supported functions because
			$\|f\|_{\bmo(\{|x| > R\})} \not\ra 0$ 
			as $R \ra \infty$ (see Theorem 6 in \cite{bo}).
			
			\item[(b)]  {\em $f$ is not rearrangeable, but 
				$\sum g_k$ converges in BMO}.\ 
			Taking $a_k = k^{-1/2}$ and $b_k = e^{k}$,
			the series from Eq.~\eqref{eq-sum} converges in BMO.
			However, $f$ fails to be rearrangeable:
			$$
			\mu_f(\alpha) = 2\sum_{k = 1}^\infty 
			e^{k-\sqrt{k}\alpha} = \infty\,,\qquad
			f^*\equiv +\infty\,.  
			$$
			Since the partial sums are integrable functions 
			of compact support, this demonstrates that
			rearrangeability  is not preserved under limits in BMO.  
			Note also that the series diverges in $L^1(\R)$,
			because $\{a_kb_k\}$ is not summable.
			
			\item[(c)] {\em $f$ is rearrangeable,
				$\sum g_k$ converges
				in BMO, but $\inf f^*>\inf f$.}\ 
			Taking $a_k = k^{-1}$ and $b_k = e^{k}$ yields
			$$ \mu_f(\alpha) = 2 \sum_{k=1}^{\infty} e^{-k(\alpha-1)} = 
			\frac{2}{e^{\alpha-1} -1} 
			\quad (\alpha>1)\,,\qquad
			f^*(s) = 1+ \log\bigl(1+\tfrac{2}{s}\bigr)\,.
			$$
			Although neither $f$ nor $f^*$ is integrable, 
			$f^*$ is integrable at the origin
			and $\|f^*\|_{\bmo}\le 2\|f\|_{\bmo}= \frac4e$. In this example, $\inf f=0$ but
$\inf \fast=1$.
		\end{itemize}
	\end{example}
	
	Similar series can be constructed in VMO,
	by taking $g(x)=(-\log|x|)_+^p$ with $p\in (0,1)$, and adjusting
	$a_k, b_k, n_k$ accordingly. 
	
	\subsection{Convergence of rearrangements}
	
	The next two examples show that the decreasing 
	rearrangement is discontinuous on $\bmo$ and 
	$\vmo$. We consider sequences of the
	form 
	\begin{equation}
		\label{eq-seq}
		f_k=f+g_k\,,
	\end{equation}
	where $f$ and $g$ are
	fixed functions of compact support, and $g_k$ is obtained
	from $g$ by scaling, dilation, and
	translation, as in Eq.~\eqref{eq-tree}.
	
	\begin{example} \label{ex-discont}
		{\em $f$, $f_k$ rearrangeable on a finite interval, 
			$f_k\to f$ in $\bmo$, but
			$f_k^*\not \to f^*$ in $\bmo$.}\ 
		Choose  $f=\Chi_{(0,1)}$ and $g=(-\log|2x|)_+$,
		and let $a_k=\frac1k$, $b_k=1$, and $n_k=-\frac12$,
		see Figure~2.
		Since $a_k\to 0$, the sequence $f_k$ converges to
		$f$ pointwise (except at $x=-\frac12$) and 
		in BMO. 
		
		The rearrangements are given by
		$$
		f_k^\ast(s)=
		\begin{cases}
			-\frac{1}{k}\log s\,,\quad &0\leq s < e^{-k}\\
			1\,,& e^{-k}\le s < 1+ e^{-k}\\
			-\frac{1}{k}\bigl(\log(s-1)\bigr)_+\,&s\ge 1+e^{-k}\,.
		\end{cases}
		$$
		By monotone convergence, $f_k^*\to f=f^*$ 
		pointwise (except at $x=-\frac12$) and in $L^1$.  
		However, since $f_k^*$ is constant on a short
		interval $J$ centred
		at $1$, we find  that
		$$\|f_k^\ast-\fast\|_\bmo\ge \cO(f_k^*-\fast,J) =\cO(f^*,J)=\frac12
		$$
		for all $k$.
	\end{example}
	
	Example~\ref{ex-discont} 
	can easily be modified to show that the decreasing 
	rearrangement is not continuous from 
	$\bmo(\Omega)$ to $\bmo(0,|\Omega|)$ for other domains.
	In particular, $f_k$ and $f$ can be considered
	as functions on $\R^n$ that happen to depend
	only on the first component. By scaling, translation,
	and restriction, one obtains examples on any 
	bounded domain $\Omega\subset\R^n$.
	
	\begin{figure}[t]\label{fig-jump}
	\includegraphics[width=0.8 \linewidth]{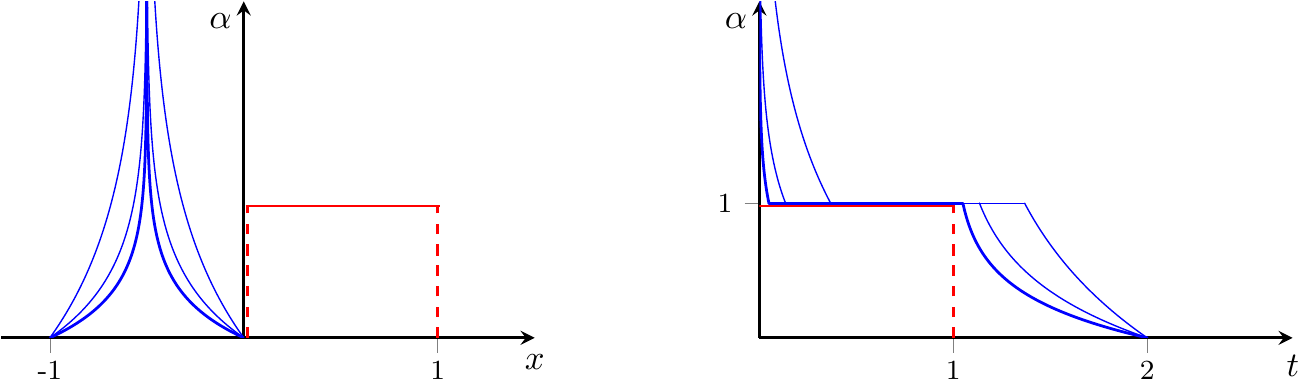}
	\caption{\small A convergent sequence $\{f_k\}$
	in $\bmo(-1,1)$ whose decreasing rearrangements $\{\fast_k\}$
do not converge in $\bmo(0,2)$.
	}
	\end{figure}
	
	If we replace the function $g(x)=(-\log|x|)_+$ 
	in Example~\ref{ex-discont}
	with $g(x)=(-\log|x|)_+^p$ for $p\in (0,1)$,
	we obtain examples of functions in 
	$\bmo\setminus\vmo$ with rearrangements in VMO:
	Since $f_k=f+g_k \not \in \vmo(\R)$ as they have
	jump discontinuities at $x=0$, while $f_k^*\in \vmo(\R_+)$.
	
	Even on the subspace VMO,
	the decreasing rearrangement is not continuous 
	unless additional conditions are imposed either on the 
	sequence of functions, or on the domain and the collection
	of shapes, see Theorems~\ref{thm-vmo-finite}
and~\ref{thm-vmo-infinite}.  
	
	\begin{example}
		\label{ex-nocont}
		{\em $f$, $f_k$ rearrangeable, $f_k\to f$ in VMO, but
			$f_k^*\not \to f^*$ in BMO.}\ 
		We again consider a sequence $f_k$
		given by Eq.~\eqref{eq-seq},
		with $g_k$ given by Eq.~\eqref{eq-tree}, but this time with 
		$$ f(x)= \sqrt{\left(-\log \bigl(\tfrac12|x+6|\bigr)\right)_+}\,,\qquad
		g(x) = \sqrt{(-\log |x|)_+}\,,
		$$
		and $a_k = k^{-\frac12}$, $b_k=e^k$, $n_k=b_k+k+1$,
		see Figure~3.
		The distribution function and decreasing rearrangement of $f$ are
		given by 
		$$
		\mu_f(\alpha)=4e^{-\alpha^2},\qquad 
		f^*(s)=\sqrt{(-\log s+\log 4)_+}\,.
		$$
		Using the properties of scaling and dilation (see 
		Property~{\ref{B-trans-dil}}), we have
		$$
		\|g_k\|_{L^1}=k^{-\frac12}e^k\|g\|_{L^1}\,,
		\qquad \|g_k\|_\bmo= k^{-\frac12}\|g\|_\bmo\, ,
		$$
		hence $f_k\to f$ in $\Loneloc(\R)$ and in $\bmo(\R)$, 
		but not in $L^1(\R)$. 
		
		Since the supports of $f$ and $g_k$
		are disjoint, the
		distribution function of $f_k$ is given by 
		$$\mu_{f_k}(\alpha)=\mu_f(\alpha)+\mu_{g_k(\alpha)}
		= \mu_f(\alpha)+ 2 e^{k(1-\alpha^2)},
		$$
		see Eq.~\eqref{eq-trans-dil}. 
		The right-hand side increases with $k$ for
		$0\le \alpha<1$, decreases for $\alpha>1$, and 
		$$
		\lim_{k\to\infty} \mu_{f_k}(\alpha) =
		\begin{cases} 
			+\infty\,, & 0\le \alpha< 1\,,\\
			4e^{-1}+2\,, & \alpha=1\,,\\
			\mu_f(\alpha)\,,\quad & \alpha>1\,.\\
		\end{cases}
		$$
		
		\begin{figure}[t]\label{fig-trees}
			\includegraphics[width=0.8 \linewidth]{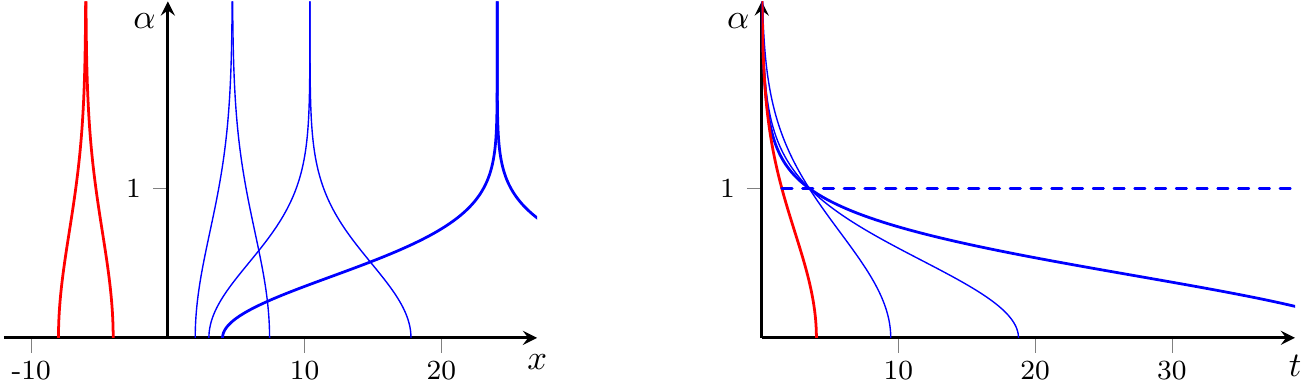}
			\caption{\small A convergent sequence
$f_k\to f$ in $\vmo(\R)$ whose decreasing rearrangements
$f_k^*$ do not converge to $\fast$.}
%
		\end{figure}
		
		We decompose $f_k= \min\{f_k,1\} + (f_k-1)_+$.
		The distribution function of the first summand is given by
		$\mu_{\min\{f_k,1\}}(\alpha) =
		\mu_f(\alpha) +2 e^{k(1-\alpha^2)}$ for $0\le \alpha <1$
		and vanishes for $\alpha\ge 1$.  Since this increases with $k$,
		Eq.~\eqref{eq-fast-alt} yields
		$$
		\lim_{k\to\infty}
		\min\{\fast_k(s),1\}
		=\Bigl| \bigcup _{k\ge1}\{\alpha\in (0,1]:
		\mu_f(\alpha) + 2e^{k(1-\alpha^2)} >s \}\Bigr|= 1\,,
		$$
		where we have used continuity of the measure
		in the first step.  
		
		The second summand has distribution function
		$ \mu_{(f_k-1)_+}(\alpha) = \mu_f(\alpha+1)
		+ 2e^{-k(2\alpha+\alpha^2)}$,
		which decreases with $k$. Eq.~\eqref{eq-fast-alt} yields
		\begin{align*}
			\lim_{k\to\infty}
			(f^*_k-1)_+(s) 
			&=\Bigl| \bigcap _{k\ge1}\{\alpha>0:
			\mu_f(\alpha+1) + 2e^{-k(2\alpha+\alpha^2)}>s \}\Bigr|\\
			&=\bigl| \{\alpha>0:\mu_f(\alpha+1)\ge s \}\bigr|\\
			&= (f^*-1)_+(s)\,.
		\end{align*}
		We have used that the level sets 
		of $f_k$ have finite measure to apply
		continuity from above. Since
		$\mu_f$ is strictly decreasing,
		the set where $\mu_f(\alpha+1)=s$ consists of a single point.
		It follows that
		$$
		f_k^*= \min\{f_k^*, 1\} + (f_k^*-1)_+
		\ \longrightarrow\ 
		\max\{f^*,1\}
		$$
		pointwise on $\R$ as $k\to\infty$. By dominated convergence,
		$$
		\lim_{k\to\infty} \|f_k^*-f^*\|_\bmo \ge 
		\lim_{k\to\infty} \cO\bigl(f_k^*-f^*,(0,2)\bigr)
		= \cO\bigl ( (1-f^*)_+,(0,2)\bigr)>0\,.
		$$ 
		This failure to converge
		is accompanied by a loss of mass reminiscent of Fatou's lemma:
		$$\|\lim \fast_k\|_\bmo =\|\max\{f^*,1\}\|_\bmo < \|f^*\|_\bmo=
		\|(\lim f_k)^*\|_\bmo\,, $$ 
		see Property~{\ref{O-trunc}}.
	\end{example}
	
	
	For the next two examples, we consider sequences of the form 
	\begin{equation}\label{form}
		f_k=f-g_k\,,
	\end{equation}
	where $f$ and $g$ are fixed functions defined on $\R_+$, and $g_k$ is obtained from $g$ in the following way:
	\begin{equation}\label{gk}
		g_k(x)=\min\left\{\tfrac{1}{k}g(x-n_k), 1\right\}
	\end{equation}
	for some choice of positive increasing sequence $\{n_k\}$. 
	
	The following example shows that on domains of
	infinite measure, the decreasing rearrangement of 
	a convergent sequence in $\vmo$ need not
	converge globally on $\R_+$.
	
	\begin{figure}[t]
		\includegraphics[width=0.8 \linewidth]{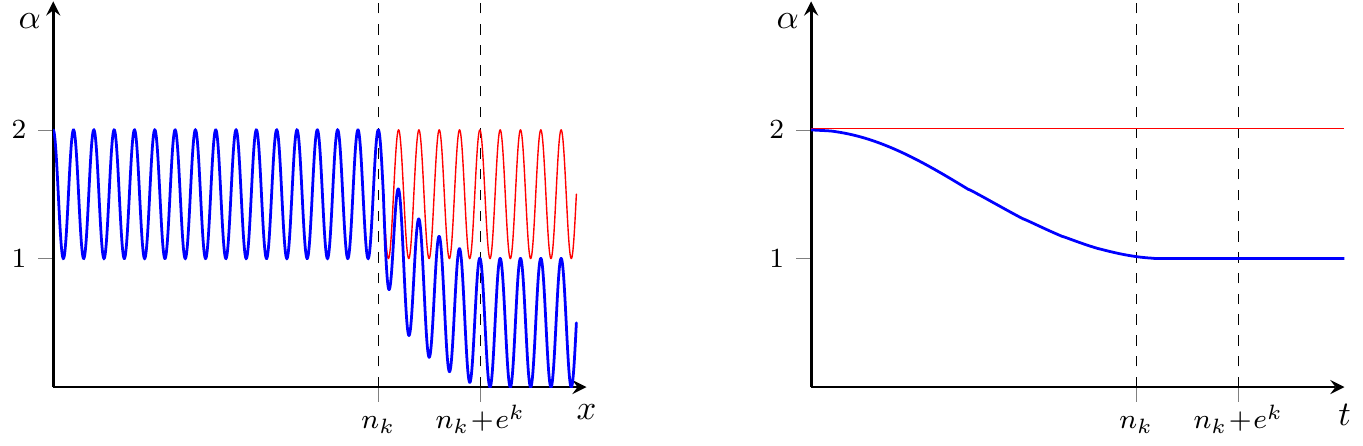}
		\caption{\small A sequence
$f_k\uparrow f$ that converges in $\vmo(\R_+)$
whose decreasing rearrangements $\fast_k$ do not converge
to $\fast$.
}
	\end{figure}
	
	\begin{example} 
		\label{ex-local}
		{\em $f$, $f_k$ rearrangeable in $\vmo(\R_+)$, $0\le f_k\uparrow f$
			in $\bmo(\R_+)$, but 
			$\fast_k\not\to \fast$ in $\BMO{}{}(\R_+)$.}\ 
		Choose $f$ to be the periodic function given by $f(x)=\frac{1}{2}\cos (\frac\pi 2 x)+\frac{3}{2}$ on $\R_+$ and $g(x)=(\ln(x))_+$ on $\R_+$, and let $n_k$ the smallest integer divisible by 4 with $n_k\ge k e^k$, see 
Figure~4. Since $g_k\le g_{k+1}$
		and $\|g_k\|_\bmo\le \frac1k \|g\|_{\bmo}\to 0$,
		the sequence $f_k$ converges to $f$
		monotonically, pointwise, and in $\bmo$.

To see that $\fast_k\not \to \fast$,
we derive a lower bound on the oscillation of
		$\fast_k-\fast$ over the interval $J=(0, n_k+e^k)$.
		The oscillation of $\fast_k$ on $J$ 
		equals that of $\max\{f_k,1 \}$ on $J$, and so 
		\[
		\begin{split}
			\cO(\fast_k,J) &= \cO(\max\{f_k,1 \},J) \\
			& \geq \frac{n_k+e^k}{n_k}\cO(\max\{f_k,1 \},(0,n_k))\\
			& \geq\frac{1}{2}\cO(\cos x,(0,\pi)) 
			= \frac{1}{\pi}.
		\end{split}
		\]
		Since $\fast\equiv 2$, it follows 
with Property~\ref{O-const} that $\|\fast_k-\fast\|_\bmo
		\ge \cO(\fast_k,J)\geq \frac{1}{\pi}$.
	\end{example}

Our final example shows that on domains of
infinite measure,
$L_k=\inf \fast_k$ need not converge to $L:=\inf\fast$,
even if $f_k\uparrow f$ pointwise and 
$(L-\fast_k)_+\rightarrow{0}$ in $\bmo(\R_+)$.
	
	\begin{example} 
		\label{ex-inf}
		{\em $f$, $f_k$ rearrangeable, $(L-\fast_k)_+\rightarrow 0$ in $\bmo(\R_+)$, but $L_k\not\rightarrow L$.}\ 
		Still considering sequences given by Eq.~\eqref{form}, with $g_k$ given by Eq.~\eqref{gk}, take $f\equiv 2$ on $\R_+$ and 
		$g(x)=(\ln(x))_+$ on $\R_+$, and let $\{n_k\}$ be any increasing sequence of positive numbers. Since $f$ and $f_k$ are continuous and decreasing, they coincide with their decreasing rearrangements. Moreover, $L=2$ and $\|(L-\fast_k)_+\|_{\bmo} = \|g_k\|_{\bmo}\leq \frac{1}{k} \|g\|_{\bmo}\to 0$. On the other hand, $L_k=0$ for every $k$, and so $L_k\not\rightarrow L$. 
	\end{example}
	
	
	\section{\bf Rearrangements on VMO} \label{sec:VMO}

	In this section, we consider 
	rearrangements on VMO and prove the main results.
	Recall, as explained in the introduction, that rearrangements are 
	fundamentally nonlinear, and so boundedness does not imply continuity. 
	We have already seen in Example~\ref{ex-discont}
	that the decreasing rearrangement fails to be continuous
	on $\bmo(\Omega)$.


\subsection{Boundedness}

	We first  show that under suitable assumptions on 
	the basis $\cS$, the decreasing rearrangement 
	of any rearrangeable function
	$f\in \VMO{\cS}{}(\Omega)$ lies in $\vmo(0,|\Omega|)$.
	In addition to the boundedness of 
	the decreasing rearrangement on $\BMO{\cS}{}$,
	this requires the following assumption on $\cS$.
	
	\noindent{\bf Density condition:}
	There exists $q\in (0,\frac14]$
	such that for every measurable $E\subset \Omega$
	with $|E||E^c|>0$, 
	\begin{equation}
		\label{eq-meas-cont}
		\limsup_{S \in \cS, |S| \ra 0} \rho(E,S)(1 - \rho(E,S)) \ge q, 
		\quad \text{where} \quad \rho(E,S) := \frac{|E\cap S|}{|S|}\,. 
	\end{equation}
	
	Note that implicit in this is the existence of shapes of arbitrarily small measure. By the Lebesgue density theorem and continuity 
of the integral, this condition holds for
the standard bases $\cB$, $\cQ$, and~$\cR$.
	
	\begin{theorem}
		\label{thm-vmobound}
		Assume that $\cS$ satisfies the density condition~\eqref{eq-meas-cont} and that $\fast\in \bmo(0,|\Omega|)$ 
		whenever $f\in\BMO{\cS}{}(\Omega)$ is rearrangeable, with 
		\begin{equation}
			\label{eq-bmobound}
			\|\fast\|_{\bmo}\leq c \|f\|_{\BMO{\cS}{}}\,.
		\end{equation}
		Then, $\fast\in \VMO{}{}(0,|\Omega|)$ 
		whenever $f\in\VMO{\cS}{}(\Omega)$. 
	\end{theorem}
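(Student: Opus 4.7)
The plan is to combine the boundedness hypothesis with the truncation properties (Properties~\ref{R-trunc} and~\ref{O-trunc}) and the density condition~\eqref{eq-meas-cont}. Fix $\eps > 0$ and let $J = (a,b) \subset (0,|\Omega|)$. Setting $\alpha := \lim_{s\to b^-}\fast(s)$ and $\beta := \fast(a)$, the values of $\fast$ on $J$ lie in $[\alpha,\beta]$, so the truncation $\tilde f := \min\{\max\{f,\alpha\},\beta\}$ has $\tilde f^\ast = \min\{\max\{\fast,\alpha\},\beta\}$ by Property~\ref{R-trunc}, which agrees with $\fast$ on $J$. The boundedness hypothesis then yields
\[
\cO(\fast, J) = \cO(\tilde f^\ast, J) \le \|\tilde f^\ast\|_\bmo \le c\,\|\tilde f\|_{\BMO{\cS}{}}.
\]
Since $\tilde f$ is valued in $[\alpha,\beta]$ we have $\cO(\tilde f,S)\le 2(\beta-\alpha)$, while Property~\ref{O-trunc} gives $\cO(\tilde f,S)\le\cO(f,S)$. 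Using the VMO hypothesis to pick $\delta_0>0$ with $\sup_{|S|\le\delta_0}\cO(f,S)<\eps/(2c)$, we obtain $\|\tilde f\|_{\BMO{\cS}{}}\le\max\{\eps/(2c),\,2(\beta-\alpha)\}$. The proof therefore reduces to showing $\beta - \alpha \to 0$ as $|J|\to 0$.

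The crucial step is that $\fast$ is continuous on $(0,|\Omega|)$. Suppose instead $\fast$ has a jump at some $s_0 \in (0,|\Omega|)$, with $\beta_0 := \lim_{s\to s_0^-}\fast(s) > \fast(s_0)=:\alpha_0$ and $\gamma := \beta_0-\alpha_0>0$. The representation of $\fast$ as the generalized inverse of $\mu_f$ in~\eqref{eq-fast-alt} forces $|\{|f|\ge\beta_0\}|=s_0$, $|\{|f|>\alpha_0\}|\le s_0$, and so $|\{\alpha_0<|f|<\beta_0\}|=0$. Setting $E:=\{|f|\ge\beta_0\}$, the indicator $\chi_E$ equals $\gamma^{-1}(\min\{\max\{f,\alpha_0\},\beta_0\}-\alpha_0)$ almost everywhere, so Property~\ref{O-trunc} gives $\cO(\chi_E,S) \le \cO(f,S)/\gamma$ for every $S\in\cS$. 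Since $0 < |E| = s_0 < |\Omega|$, the density condition combined with~\eqref{eq-char-func} supplies arbitrarily small shapes with $\cO(\chi_E,S) \ge 2q > 0$, contradicting VMO of $f$. Hence $\fast$ is continuous.

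When $f$ is bounded, $\fast$ is continuous with finite limits at both endpoints of $(0,|\Omega|)$, hence uniformly continuous, and $\beta-\alpha\to 0$ uniformly as $|J|\to 0$. For unbounded $f$, continuity handles intervals $J$ staying away from $s = 0$, and the remaining case of intervals approaching the origin I plan to address by approximation: for $N>0$, the truncation $f_N := \min\{\max\{f,-N\},N\}$ is bounded and in $\VMO{\cS}{}(\Omega)$, so by Property~\ref{R-trunc} and the bounded case, $f_N^\ast = \min\{\max\{\fast,-N\},N\}$ belongs to $\VMO{}{}(0,|\Omega|)$. Since $\fast - f_N^\ast = ((|f|-N)_+)^\ast$, the boundedness hypothesis and closedness of $\VMO{}{}$ in $\bmo$ reduce the problem to showing $\|(|f|-N)_+\|_{\BMO{\cS}{}}\to 0$ as $N\to\infty$. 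This final decay is the main obstacle: it must be extracted from the VMO assumption on $f$ together with $|\{|f|>N\}|\to 0$ (from rearrangeability), presumably via a shape-by-shape analysis leveraging the support estimate in Property~\ref{B-support} to treat separately shapes whose intersection with the shrinking set $\{|f|>N\}$ occupies a small versus a large fraction.
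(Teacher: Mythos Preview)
Your continuity argument for $\fast$ (second paragraph) is essentially the paper's Lemma~\ref{jump}, and the reduction via two-sided truncation to controlling $\beta-\alpha$ is sound once $f\ge 0$. You should make that reduction explicit via Property~\ref{O-abs}, since Property~\ref{R-trunc} is stated only for nonnegative functions; this also repairs the slip in your continuity paragraph, where the truncation should be applied to $|f|$ rather than $f$.

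The genuine gap is your final step: the decay $\|(|f|-N)_+\|_{\BMO{\cS}{}}\to 0$. Your proposed plan via Property~\ref{B-support} and a small-fraction/large-fraction split on $\{|f|>N\}\cap S$ does not work, because the bound in Property~\ref{B-support} controls $\cO((|f|-N)_+,S)$ only in terms of $\|(|f|-N)_+\|_{\BMO{\cS}{}}$ itself, which is circular. The correct dichotomy is by the \emph{size} of $S$: for $|S|<\eta$, Properties~\ref{O-trunc} and~\ref{O-abs} give $\cO((|f|-N)_+,S)\le 2\cO(f,S)$, which is small by the VMO hypothesis; for $|S|\ge\eta$, the crude mass bound
\[
\cO((|f|-N)_+,S)\le \frac{2}{|S|}\int_\Omega (|f|-N)_+ \le \frac{2}{\eta}\int_0^{\mu_f(N)}\fast
\]
tends to zero because $\fast\in\bmo(0,|\Omega|)$ is integrable at the origin (Remark~\ref{integrability}) and $\mu_f(N)\to 0$ by rearrangeability.

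Once completed this way, your argument is a repackaging of the paper's proof rather than a different route. The paper skips the detour through bounded approximants: after invoking Lemma~\ref{vmo-mono} to reduce to intervals $J=(a,b)\subset[0,\delta)$, it applies exactly the same small-shape/large-shape dichotomy directly to $(f-\fast(b))_+$, obtaining
\[
\cO(\fast,J)\le \max\Bigl\{2c\sup_{|S|<\eta}\cO(f,S),\ \frac{4c}{\eta}\int_0^\delta \fast\Bigr\}.
\]
Your approach has the expository merit of isolating the bounded case, but ultimately rests on the identical estimate with $\delta$ replaced by $\mu_f(N)$.
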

	
	We need two technical lemmas. The first provides a sufficient condition for a nonnegative decreasing function of a single variable to be in VMO.
	
	
	\begin{lemma}\label{vmo-mono}
		Let $I$ be an open interval (possibly infinite) with 
		left endpoint at the origin, and let $g\in \Loneloc(I)$
		be nonnegative and decreasing.
		Then $g\in\vmo(I)$ if and only if 
		$g$ is continuous on $I$ and
		\begin{equation}
			\label{eq-vmo-mono}
			\lim_{\delta \ra 0} \sup_{J \subset [0,\delta) \cap I}\cO(g,J) =0.
		\end{equation}
	\end{lemma}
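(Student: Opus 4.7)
The plan is to exploit two structural features of a nonnegative decreasing $g$: first, the elementary pointwise bound $|g(x) - g_J| \le g(a) - g(b)$ on $J = (a,b)$, which gives $\cO(g, J) \le g(a) - g(b)$; and second, that a monotone function can fail to be continuous only at jump discontinuities, each of which produces a quantifiable amount of mean oscillation on a symmetric interval. The forward direction is almost immediate: the inclusion $\{J \subset [0,\delta) \cap I\} \subset \{J \subset I : |J| \le \delta\}$ gives \eqref{eq-vmo-mono} from the VMO condition, while continuity of $g$ on $I$ follows because a jump of size $h > 0$ at $x_0 \in I$ forces
$$\cO\bigl(g,(x_0 - \epsilon, x_0 + \epsilon)\bigr) \longrightarrow h/2 \quad \text{as } \epsilon \to 0^+,$$
contradicting $g \in \vmo$; here one uses that $g_{J_\epsilon} \to \tfrac12(g(x_0^-)+g(x_0^+))$ by dominated convergence on each side of $x_0$.

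For the converse, I would fix $\epsilon > 0$ and use \eqref{eq-vmo-mono} to choose $\delta_0 > 0$ with $\cO(g, J) < \epsilon$ whenever $J \subset [0, 2\delta_0) \cap I$. The next step is to establish uniform continuity of $g$ on the region $A := [\delta_0, L) \cap I$, where $L \in (0, \infty]$ is the right endpoint of $I$: since $g$ is bounded above by $g(\delta_0)$ and decreasing on $A$, it admits a finite limit as $x \to L^-$ (or as $x \to \infty$), so the continuous extension lives on a compact set, or splits as compact-plus-tail-near-limit in the infinite case, producing a modulus $\delta_1 > 0$ such that $x, y \in A$ and $|x-y| \le \delta_1$ imply $|g(x) - g(y)| < \epsilon$.

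Setting $\delta := \min(\delta_0, \delta_1)$, any $J = (a,b) \subset I$ with $|J| \le \delta$ falls into one of two cases. If $b \le 2\delta_0$, then $J \subset [0, 2\delta_0) \cap I$ and $\cO(g, J) < \epsilon$ by the choice of $\delta_0$. Otherwise $b > 2\delta_0$, so $a = b - |J| \ge 2\delta_0 - \delta_0 = \delta_0$, meaning $J \subset A$, and the key pointwise bound combined with the modulus gives $\cO(g, J) \le g(a) - g(b) < \epsilon$. Together these yield $\sup_{|J| \le \delta} \cO(g, J) < \epsilon$, hence $g \in \vmo(I)$.

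The main delicacy is the uniform-continuity step on $A$ when $L = \infty$: one has to invoke the existence of $\lim_{x \to \infty} g(x)$ (automatic from $g$ being decreasing and nonnegative) to control the tail, then use compactness on a finite subinterval to patch in the rest. When $L < \infty$, the argument is simpler, since $g$ extends continuously to the compact set $[\delta_0, L]$ via the finite limit $g(L^-)$, and uniform continuity is immediate.
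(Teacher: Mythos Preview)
Your proof is correct and follows essentially the same approach as the paper's: both split short intervals into those contained in a neighborhood of the origin (handled by hypothesis~\eqref{eq-vmo-mono}) and those bounded away from it (handled by uniform continuity of the continuous, decreasing, bounded-below function $g$). The only cosmetic differences are that the paper writes the splitting as $[0,\delta)$ versus $[\delta/2,\infty)\cap I$ and invokes Property~\ref{O-2int} to pass from uniform continuity to small oscillation, whereas you use the direct pointwise bound $\cO(g,J)\le g(a)-g(b)$; and the paper states the jump-discontinuity argument in one line while you spell out that $\cO(g,(x_0-\eps,x_0+\eps))\to h/2$.
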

	
	\begin{proof} 
		If $g\in\vmo(I)$, then Eq.~\eqref{eq-vmo-mono} holds by definition. Furthermore, $g$ cannot have jump discontinuities hence, being monotone, is continuous on $I$.
		
		Conversely, suppose that $g$ is continuous and Eq.~\eqref{eq-vmo-mono} holds.
		Given $\eps > 0$, take $\delta > 0$ so that $[0,\delta) \subset I$ and $\sup_{J \subset [0,\delta)}\cO(g,J) < \eps$. 
		
		Since $g$ is continuous,
		decreasing, and bounded below, it is 
		uniformly continuous on $[\delta/2, \infty) \cap I$.  Thus 
		there exists $\eta > 0$ such that
		$|g(x)-g(y)|<\eps$ for every pair of points
		$x,y\in I$ with $x,y\ge \delta/2$ and $|x-y|<\eta$.
		By Property~\ref{O-2int}, this implies that
		$\cO(g, J) <\eps$ for every interval $J\subset [\delta/2,\infty) \cap I$ with
		$|J|<\eta$.
		
		Thus for an interval $J \subset I$ with $|J| < \min\{\eta, \delta/2\}$, either $J \subset [0,\delta)$ or $J\subset [\delta/2,\infty) \cap I$, hence $\cO(g,J) < \epsilon$.
	\end{proof}
	
	The next lemma shows
	that if $\fast$ has a jump discontinuity then the 
	oscillation of $f$ must be large on shapes of arbitrarily small measure, 
	and this can be quantified in terms of the size of the jump.  
	
	\begin{lemma}
		\label{jump}
		Suppose $\cS$ satisfies the density assumption~\eqref{eq-meas-cont}.
		Then the decreasing rearrangement
		$f^*$ of any function $f\in \VMO{\cS}{}(\Omega)$
		is continuous on $(0,|\Omega|)$.
	\end{lemma}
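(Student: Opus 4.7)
The plan is to reduce a hypothetical jump of $\fast$ to the VMO membership of an indicator function $\Chi_E$ with $0<|E|<|\Omega|$, and then to derive a contradiction from the density condition~\eqref{eq-meas-cont}. Since $|f|^*=\fast$ and $\cO(|f|,S)\le 2\cO(f,S)$ by Property~\ref{O-abs}, I may assume $f\ge 0$. Supposing for contradiction that $\fast$ has a jump at some $s_0\in(0,|\Omega|)$, I set $\alpha_1:=\fast(s_0)<\fast(s_0^-)=:\alpha_2$, both finite by rearrangeability.

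The first real step is to identify the associated level set of $f$. By the monotonicity and right-continuity of $\fast$, $\{\fast>\alpha\}=[0,s_0)$ for every $\alpha\in(\alpha_1,\alpha_2)$, and equimeasurability gives $\mu_f(\alpha)=s_0$ throughout this interval. Passing to the endpoints by monotone continuity of measure on nested level sets of finite measure yields $|\{f>\alpha_1\}|=|\{f\ge\alpha_2\}|=s_0$, so the intermediate set $\{\alpha_1<f<\alpha_2\}$ is negligible. Setting $E:=\{f\ge\alpha_2\}$ then gives $0<|E|=s_0<|\Omega|$, so both $E$ and $E^c$ have positive measure.

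The second step is to hand off the VMO structure to $\Chi_E$ via truncation. The function $\ftilde:=\min\{\max\{f,\alpha_1\},\alpha_2\}$ equals $\alpha_1+(\alpha_2-\alpha_1)\Chi_E$ almost everywhere by the previous step. Property~\ref{O-trunc} gives $\cO(\ftilde,S)\le\cO(f,S)$, and Property~\ref{O-const} then turns this into $\cO(\Chi_E,S)\le(\alpha_2-\alpha_1)^{-1}\cO(f,S)$, so that $\Chi_E\in\VMO{\cS}{}(\Omega)$. But the density condition produces shapes $S\in\cS$ of arbitrarily small measure with $\rho(E,S)(1-\rho(E,S))\ge q$, and by Eq.~\eqref{eq-char-func} we have $\cO(\Chi_E,S)\ge 2q>0$ on each such shape, contradicting $\Chi_E\in\VMO{\cS}{}(\Omega)$.

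The main obstacle is the measure-theoretic bookkeeping in the first step -- isolating the ``gap'' in the range of $f$ corresponding to the jump in $\fast$ and verifying $|E|=s_0$ by passing carefully to limits in the level sets $\{f>\alpha\}$ and $\{f\ge\alpha\}$. Once $E$ has been pinned down with both itself and its complement of positive measure, everything else is a direct application of the shapewise identities for mean oscillation followed by a one-line invocation of the density condition.
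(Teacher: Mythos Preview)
Your proof is correct and follows essentially the same approach as the paper: reduce to nonnegative $f$, truncate to $\ftilde=\min\{\max\{f,\alpha_1\},\alpha_2\}$, recognize $\ftilde$ as an affine image of $\Chi_E$ for a level set $E$ with $0<|E|<|\Omega|$, and then use the density condition together with Eq.~\eqref{eq-char-func} to bound the jump by the modulus of oscillation. The only cosmetic difference is that the paper invokes Property~\ref{R-trunc} to compute $(\ftilde)^*=\alpha_1+(\alpha_2-\alpha_1)\Chi_{(0,s_0)}$ and reads off the structure of $\ftilde$ from there, whereas you carry out the level-set bookkeeping directly; the paper also phrases the conclusion as a quantitative bound $\alpha_2-\alpha_1\le (2q)^{-1}\limsup_{|S|\to 0}\cO(f,S)$ rather than as a contradiction, but this is the same content.
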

	
	\begin{proof}
		Let $f\in \BMO{\cS}{}(\Omega)$ be a
		rearrangeable function.  
		Replacing $f$ with $|f|$, we 
		assume without loss of generality that $f$ is nonnegative.
		
		Since $f^*$ is monotone decreasing and right-continuous, its only possible discontinuities are jumps of the form 
		$$
		\beta:=\lim_{s\rightarrow t^-}\fast(s)>f^*(t)=:\alpha\,,
		$$
		at some $t\in (0,|\Omega|)$.  We will estimate the size of the jump, $\beta - \alpha$, in terms of the modulus of oscillation of $f$.
		
		Consider the truncation $\widetilde f= \min(\max(f,\alpha),\beta)$.
		By Property~\ref{R-trunc}, 
		$$
		(\widetilde f)^*= \min(\max(f^*,\alpha),\beta)
		= \alpha + (\beta-\alpha)\Chi_{(0,t)}\,.
		$$
		This implies, since 
		$\widetilde f\ge 0$, that $\widetilde f$ agrees with 
		$\alpha+ (\beta-\alpha)\Chi_{E}$
		almost everywhere, where $E:=E_\gamma(f)$ is the level set of $f$ at 
		any $\gamma \in (\alpha,\beta)$. 
		By equimeasurability (see Property~\ref{R-equi}),
		$|E| = t>0$ and $|E^c|>0$.
		
		Given $\delta > 0$, the density assumption in Eq.~\eqref{eq-meas-cont}
		gives us the existence of a shape $S\in\cS$ with $|S|\le \delta$
		such that $\rho(E,S)(1 - \rho(E,S)) \ge q > 0$. We estimate
		$$
		\cO(f, S) \ge \cO(\widetilde f, S)
		= (\beta-\alpha) \cO(\Chi_{E}, S) \ge 2q(\beta-\alpha)\, ,
		$$
		where we have used Property~\ref{O-trunc} in the middle step,
		then applied Property~\ref{O-const}, and finally Eq.~\eqref{eq-char-func}. As $\delta>0$ was arbitrary, we have
		$$
		\beta-\alpha  \le (2q)^{-1}\limsup_{|S| \ra 0} \cO(f, S)\, .
		$$
		Thus for 
		$f\in \VMO{\cS}{}(\Omega)$, $f^*$ cannot have any jumps.
	\end{proof}
	
	
	\begin{proof}[Proof of Theorem~\ref{thm-vmobound}]
		Suppose $f\in\VMO{\cS}{}(\Omega)$ is rearrangeable. 
		We need to show $f^\ast\in \vmo(0,|\Omega|)$.
		By Property~\ref{O-abs}, it suffices to consider 
		nonnegative $f$, and by Lemma~\ref{jump} we may further
		assume that $f^*$ is continuous.
		
		Therefore, by Lemma \ref{vmo-mono},
		we only need to show that $f^*$ has vanishing mean oscillation at the origin.  To do so, we will bound
		$\cO(\fast,J)$
		for  $J \subset [0,\delta)$ with $\delta < |\Omega|$.

		Writing $J = (a,b)$ and $\beta=\fast(b)$, consider the 
		function $g=(\fast-\beta)_+ = (f-\beta)_+^*$. 
		Since $\fast \in \bmo(0,|\Omega|)$,
		and $\fast \geq \beta$ on $J$, we have by 
		Eq.~\eqref{eq-bmobound} that
		$$
		\cO(\fast, J)=\cO((\fast-\beta)_+, J)
		\le \|(\fast-\beta)_+\|_\bmo\le c\|(f-\beta)_+\|_\bmo\,.
		$$
		Let $S$ be a shape with $\cO((f-\beta)_+,S)\ge
		\frac12 \|(f-\beta)_+\|_\bmo$.
		It follows from the above, together with 
		Property~\ref{O-median} and the equimeasurability of $(f-\beta)_+$ and $g$, that
		$$
		\cO(\fast, J)\le 2c \, \cO((f-\beta)_+,S) \le \frac{4c}{|S|}
		\int_\Omega (f-\beta)_+ = \frac{4c}{|S|} \int_0^{b} g \leq \frac{4c}{|S|}
		\int_0^{\delta}\fast\,.
		$$
		For any $\eta > 0$, we therefore get, using Property~\ref{O-trunc}, that
		$$
		\cO(\fast, J) \leq \max\left\{2c\sup_{|S| < \eta}\cO(f,S),  \frac{4c}{\eta}\int_0^{\delta}\fast\right\}.
		$$
		Since $f\in\VMO{\cS}{}(\Omega)$ and $f^*$ is \textcolor{magenta}{in $\bmo(0,|\Omega|)$, hence integrable at the origin (see Remark~\ref{integrability})}, we can choose $\eta$ and then $\delta$ to make the right-hand-side arbitrary small.
	\end{proof}
	
	As as consequence of this bound on the decreasing rearrangement, we are able to obtain an analogous result for the symmetric decreasing rearrangement defined by Eq.~\eqref{symmrea}.
	
	\begin{corollary}\label{corsymbound}
		If $f\in \vmo(\R^n)$ is rearrangeable then $Sf\in \vmo(\R^n)$.
	\end{corollary}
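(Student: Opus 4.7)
The strategy is to first apply Theorem~\ref{thm-vmobound} to obtain $\fast\in\vmo(\R_+)$, and then to verify by a direct ball-by-ball estimate that $Sf(x)=\fast(\omega_n|x|^n)$ belongs to $\vmo(\R^n)$. The basis $\cQ$ in $\R^n$ satisfies the density condition~\eqref{eq-meas-cont}, and the rearrangement is bounded from $\bmo(\R^n)$ to $\bmo(\R_+)$ by the result of \cite{BDG11} quoted in the introduction, so Theorem~\ref{thm-vmobound} applies. Lemma~\ref{jump} then guarantees that $\fast$ is continuous on $(0,\infty)$, and rearrangeability forces $\fast(s)\to 0$ as $s\to\infty$.

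Using Property~\ref{B-equiv} and the equivalence $\cB\approx\cQ$, it suffices to bound $\cO(Sf,B)$ on balls $B=B(x_0,r)$ of small measure. Given $\eps>0$, I first fix $R_M$ large enough that $\fast(s)\le\eps$ for $s\ge \omega_n(R_M/2)^n$, and $\delta_1>0$ small enough that $\cO(\fast,J)\le\eps$ whenever $J\subset\R_+$ has $|J|\le\delta_1$. I would then distinguish three regimes. When $|x_0|\le r$, the inclusion $B\subset B(0,2r)$ and Property~\ref{O-subset} reduce the problem to a ball centered at the origin, for which a polar change of variables yields the clean identity $\cO(Sf,B(0,\rho))=\cO(\fast,(0,\omega_n\rho^n))$. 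When $|x_0|\ge R_M$, the bound $|x|\ge R_M/2$ throughout $B$ gives $Sf\le\eps$ pointwise, hence $\cO(Sf,B)\le 2\eps$. The intermediate regime $r<|x_0|<R_M$ is the delicate one: I would enclose $B$ in the spherical sector
$$
C_B=\bigl\{x\in\R^n:|x_0|-r\le|x|\le|x_0|+r,\ x/|x|\in\Omega_B\bigr\},
$$
where $\Omega_B\subset S^{n-1}$ is the angular projection of $B$, and observe by a geometric computation that $|C_B|\le c_n|B|$ (the angular extent is bounded by a multiple of $(r/|x_0|)^{n-1}$). Combining Property~\ref{O-subset} with the polar identity $\cO(Sf,C_B)=\cO(\fast,J_B)$ for $J_B=[\omega_n(|x_0|-r)^n,\omega_n(|x_0|+r)^n]$ yields $\cO(Sf,B)\le c_n\,\cO(\fast,J_B)$, and since $|x_0|\le R_M$ we have $|J_B|\lesssim R_M^{n-1}r\to 0$ as $r\to 0$, so the VMO property of $\fast$ closes the estimate.

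The main obstacle is this intermediate regime: naively containing an off-center ball in a ball centered at the origin produces the measure ratio $(|x_0|/r)^n$, which blows up as $r\to 0$ and makes Property~\ref{O-subset} useless on its own. The spherical sector is engineered precisely to give a containing set of comparable measure on which the radial structure of $Sf$ collapses the mean oscillation to a one-dimensional oscillation of $\fast$ over an interval whose length is controlled by $R_M^{n-1}r$; the far-field case, which the sector bound does not cover uniformly in $|x_0|$, is exactly what the decay of $\fast$ at infinity rescues via the pointwise smallness of $Sf$.
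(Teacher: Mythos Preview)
Your argument is sound apart from one slip: rearrangeability does \emph{not} force $\fast(s)\to 0$ as $s\to\infty$; see Example~\ref{ex-trees}(c), where $\inf\fast=1$, and the remark following it indicating that VMO analogues exist. The fix is immediate: set $L:=\inf\fast$, choose $R_M$ so that $\fast(s)\le L+\eps$ for $s\ge\omega_n(R_M/2)^n$, and in the far-field regime conclude $L\le Sf\le L+\eps$ on $B$, whence $\cO(Sf,B)\le\eps$. With this correction the three-regime scheme goes through.

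Your route, however, differs from the paper's. The paper first notes that $Sf$, being continuous on $\Rn\setminus\{0\}$, radially decreasing and nonnegative, is uniformly continuous on the complement of any centred ball; by the argument of Lemma~\ref{vmo-mono} it therefore suffices to control $\cO(Sf,Q)$ for cubes $Q\subset B(0,R)$ as $R\to 0$. That case is dispatched in one line by invoking Lemma~\ref{SDRlocalequivalent} (from~\cite{BDG11}), which yields $\cO(Sf,Q)\le n^{n/2}\omega_n\,\cO(\fast,I)$ for an interval $I$ of length at most $n\omega_nR^{n-1}\diam{Q}$. Your three-regime analysis bypasses that black box: the spherical-sector construction is, in effect, a direct derivation of the $f_2=0$ case of Lemma~\ref{SDRlocalequivalent}, and your far-field case replaces the uniform-continuity reduction by the decay of $\fast$ toward $L$. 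The trade-off is that the paper's proof is shorter because the geometric work is offloaded to the cited lemma, while yours is self-contained and makes the mechanism---containment in a radially symmetric set of comparable measure, collapsing the oscillation to one dimension---explicit.
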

	
	To prove this corollary, we make use of the following technical lemma from \cite{BDG11} that allows for the transfer of mean oscillation estimates for the decreasing rearrangement to the symmetric decreasing rearrangement. 
			
			\begin{lemma}[\cite{BDG11}]
				\label{SDRlocalequivalent}
				Let $R > 0$ and $Q\subset B(0,R)$
				be a cube of diameter $d$, centred at a 
				point $x$ with $|x|\le R-d/2$.
				There is an interval $I\subset (0,\omega_n R^n)$ of length
				$|I|\le n \omega_nR^{n-1}d$, 
				such that if $f_1,f_2$ are rearrangeable, then 
				$$
				\cO(Sf_1-Sf_2,Q)\leq n^{\frac{n}{2}}\omega_n\,\cO(f_1^*-f_2^*,I)\,.
				$$
			\end{lemma}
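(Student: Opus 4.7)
The plan is to exploit the radial structure of the symmetric decreasing rearrangement. Writing $h := f_1^* - f_2^*$ and $\phi(y) := \omega_n |y|^n$, we have $(Sf_1 - Sf_2)(y) = h(\phi(y))$, so the task reduces to bounding $\cO(h \circ \phi, Q)$ in terms of $\cO(h, I)$ for a well-chosen interval $I$.

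For the choice of $I$, I would set $r_+ := |x| + d/2$ and $r_- := \max\{0, |x| - d/2\}$, and take $I := (\omega_n r_-^n, \omega_n r_+^n)$. Since $Q$ is contained in the ball $B(x, d/2)$ circumscribed about it (the maximum distance from the center $x$ to a point of $Q$ equals $d/2$), we have $\phi(Q) \subset I$; and $r_+ \le R$ gives $I \subset (0, \omega_n R^n)$. To verify $|I| \le n\omega_n R^{n-1} d$: in the case $|x| \ge d/2$, the mean value theorem yields $r_+^n - r_-^n \le n r_+^{n-1} d \le n R^{n-1} d$; in the case $|x| < d/2$, the bound $r_+ < d$ combined with $r_+ \le R$ gives $r_+^n \le r_+\cdot R^{n-1} < d\cdot R^{n-1}$, whence $|I| < \omega_n R^{n-1} d \le n\omega_n R^{n-1} d$.

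For the oscillation bound, I would pass first to the circumscribed ball $B := B(x, d/2) \supset Q$ via Property~\ref{O-subset}:
$$
\cO(h \circ \phi, Q) \le \frac{|B|}{|Q|}\,\cO(h \circ \phi, B) = \frac{n^{n/2} \omega_n}{2^n}\,\cO(h \circ \phi, B)\,,
$$
since $|Q| = (d/\sqrt{n})^n$ and $|B| = \omega_n (d/2)^n$. The remaining task is to prove $\cO(h \circ \phi, B) \le 2^n\,\cO(h, I)$. Here I would use that $h \circ \phi$ is radial: by the coarea formula, integrals of $h \circ \phi$ over $B$ can be rewritten as weighted integrals of $h$ over $I$ with weight $\rho_B(u)$ equal to the fraction of the sphere $\partial B(0, r(u))$ contained in $B$, where $r(u) := (u/\omega_n)^{1/n}$; in particular $\int_I \rho_B(u)\,du = |B|$. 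Applying Property~\ref{O-median} with the constant $h_I$ then yields
$$
\cO(h \circ \phi, B) \le \frac{2}{|B|}\int_I |h(u) - h_I|\,\rho_B(u)\, du \le \frac{2\,\|\rho_B\|_\infty\,|I|}{|B|}\,\cO(h, I)\,,
$$
so everything reduces to the geometric inequality $\|\rho_B\|_\infty\,|I|/|B| \le 2^{n-1}$.

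The main obstacle is this last peak-to-average bound for the cap-density $\rho_B$. Since the spherical cap $B(x, d/2) \cap \partial B(0, r)$ degenerates as $r \to r_\pm$, the maximum of $\rho_B$ is attained at some interior radius, and I would compute it explicitly via the law of cosines and the standard cap-area formula, handling separately the degenerate case $|x| < d/2$ in which $B$ contains the origin and the spheres of small radius lie entirely inside $B$.
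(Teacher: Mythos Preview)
The paper does not supply a proof of this lemma; it is quoted from~\cite{BDG11} and used as a black box, so there is no in-paper argument to compare against.

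That said, your reduction is clean up to the final peak-to-average inequality, but that inequality fails precisely in the ``degenerate'' case you flag. When $|x|<d/2$ the origin lies in $B=B(x,d/2)$, so for every radius $r<d/2-|x|$ the sphere $\partial B(0,r)$ is entirely contained in $B$; hence $\|\rho_B\|_\infty=1$. Since also $r_-=0$ and $r_+=|x|+d/2$, one has
\[
\frac{|I|}{|B|}=\frac{r_+^n}{(d/2)^n}=\Bigl(1+\tfrac{2|x|}{d}\Bigr)^{n},
\]
which tends to $2^n$ as $|x|\uparrow d/2$. Thus $\|\rho_B\|_\infty\,|I|/|B|$ can be made arbitrarily close to $2^n$, double the target $2^{n-1}$; carried through, your argument only delivers the constant $2\,n^{n/2}\omega_n$, not $n^{n/2}\omega_n$.

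The degenerate case is easily repaired by comparing with the \emph{centred} ball $B':=B(0,r_+)$ instead of $B$. The pushforward of Lebesgue measure on $B'$ under $\phi$ is exactly Lebesgue measure on $I$, so $\cO(h\circ\phi,B')=\cO(h,I)$, and Property~\ref{O-subset} gives
\[
\cO(h\circ\phi,Q)\le \frac{|B'|}{|Q|}\,\cO(h,I)
= n^{n/2}\omega_n\Bigl(\frac{r_+}{d}\Bigr)^{n}\cO(h,I)
< n^{n/2}\omega_n\,\cO(h,I),
\]
since $r_+<d$. In the remaining case $|x|\ge d/2$ the cap lies in the hemisphere facing $x$, so $\rho_B\le\tfrac12$, and the maximal half-angle is $\theta=\arcsin\bigl((d/2)/|x|\bigr)$; this does yield $\|\rho_B\|_\infty|I|/|B|\le 2^{n-1}$ in low dimensions (with equality at $|x|=d/2$), but verifying it for all $n$ via the spherical-cap area formula is the real computation still ahead of you.
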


	\begin{proof}[Proof of Corollary 4.4]
		Let $f\in \vmo(\R^n)$ be rearrangeable.  By the boundedness of the decreasing rearrangement on $\bmo(\R^n)$ and Theorem~\ref{thm-vmobound}, $\fast \in \vmo(\R_+)$.  Since by
		definition $Sf(x)=\fast(\omega_n |x|^n)$ , it is continuous on $\R^n\setminus\{0\}$ by 
		Lemma~\ref{jump}. Moreover, as a radially decreasing,
		nonnegative function, it is uniformly continuous
		on the complement of any centred ball of finite radius. By the same argument as in 
		Lemma~\ref{vmo-mono}, it therefore suffices to show that the modulus of oscillation on $B(0,R)$ vanishes as $R \ra 0$. 
		
		For a cube $Q$ contained in $B(0,R)$, 
		Lemma~\ref{SDRlocalequivalent},
		applied to $f_1=f$ and $f_2=0$, yields that
		$$
		\cO(Sf, Q)\le n^{\frac{n}{2}}\omega_n\cO(\fast,I)
		$$
		for an interval $I$ with $|I| \le c_n R^n$. Since $\fast \in \vmo(\R_+)$, we have
		$$\lim_{R \ra 0} \sup_{Q \subset B(0,R)} \cO(Sf,Q)=0\, .$$
\\[-1cm]
	\end{proof}

	\subsection{Continuity}
	
	In this section, we derive conditions
	on a sequence of functions $f_k$ in BMO 
      converging to a function $f$ in VMO that ensure that 
the sequence of rearrangements $\fast_k$ converges in BMO to $\fast$ in VMO. 
	
	
	
	For $\vmo$, there exists an analogue of the Arzel\`{a}-Ascoli theorem that can be used to characterize relative compactness \cite{bn1}. In our case, we take advantage of the monotonicity of rearrangements and make use of a theorem of P\'{o}lya (see \cite{po} and \cite[page 270]{ps}), given here under slightly weakened assumptions.
	
	\begin{lemma}\label{supedPolya}
		Let $f_k$, $k\in\N$, be monotone decreasing functions on $(0,b)$ for some $0<b\leq\infty$ converging almost everywhere to a continuous function $f$. Then, the convergence is uniform on any compact subinterval of $(0,b)$. Furthermore, if $b=\infty$, $f_k$, $k\in\N$, and $f$ are bounded below and $\inf_t f_k(t)\rightarrow \inf_t f(t)$, then the convergence is uniform on $[a,\infty)$ for any $a>0$. 
	\end{lemma}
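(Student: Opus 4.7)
The plan is to proceed in three stages: upgrade the a.e.\ convergence to pointwise convergence on all of $(0,b)$, deduce uniform convergence on compact subintervals by the classical Pólya argument, and finally treat the tail $[T,\infty)$ using the hypothesis on the infima.

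For the first stage, fix $t\in(0,b)$ and choose sequences $y_n\uparrow t$ and $z_n\downarrow t$ lying in the full-measure set on which $f_k\to f$. Since each $f_k$ is decreasing, $f_k(y_n)\ge f_k(t)\ge f_k(z_n)$. Sending $k\to\infty$ gives
$$
f(y_n)\ge \limsup_{k\to\infty} f_k(t)\ge \liminf_{k\to\infty} f_k(t)\ge f(z_n),
$$
and then sending $n\to\infty$ together with continuity of $f$ forces $\lim_k f_k(t)=f(t)$.

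For the second stage, fix a compact subinterval $[c,d]\subset(0,b)$ and $\eps>0$. By uniform continuity of $f$ on a slightly larger closed subinterval of $(0,b)$, I can choose a partition $c=t_0<t_1<\cdots<t_N=d$ with $f(t_i)-f(t_{i+1})<\eps$ for every $i$. For $t\in[t_i,t_{i+1}]$, monotonicity yields $f_k(t_{i+1})\le f_k(t)\le f_k(t_i)$ and similarly for $f$, so
$$
f_k(t_{i+1})-f(t_i)\le f_k(t)-f(t)\le f_k(t_i)-f(t_{i+1}).
$$
Applying the pointwise convergence from stage one at the finitely many partition points, we can make $|f_k(t_i)-f(t_i)|<\eps$ for all $i$ and all sufficiently large $k$, which combined with the partition estimate gives $|f_k(t)-f(t)|<2\eps$ uniformly on $[c,d]$.

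The third stage, which I expect to be the main obstacle, requires combining stage two with the convergence of the infima in order to control the tail. Assume $b=\infty$, set $L=\inf f$ and $L_k=\inf f_k$, and fix $\eps>0$. Choose $T\ge a$ so large that $f(T)-L<\eps$; by hypothesis there is $K_1$ with $|L_k-L|<\eps$ for $k\ge K_1$, and stage two applied to $[a,T]$ gives $K_2$ so that $|f_k(T)-f(T)|<\eps$, hence also $|f_k(T)-L|<2\eps$, for $k\ge K_2$. For $t\ge T$ and $k\ge \max(K_1,K_2)$, monotonicity places $f_k(t)\in[L_k,f_k(T)]$ and $f(t)\in[L,f(T)]$, so both values lie within $3\eps$ of $L$, giving $|f_k(t)-f(t)|\le 6\eps$. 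Combining with the uniform estimate on $[a,T]$ from stage two yields uniform convergence on $[a,\infty)$. The crucial role of the infimum hypothesis is precisely to synchronize the tail behaviour of $f_k$ and $f$; without it, Example~\ref{ex-inf} already shows the tails may drift apart even when the $f_k$ converge in BMO.
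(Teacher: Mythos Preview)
Your proof is correct and follows essentially the same P\'olya-type partition argument as the paper. The only differences are cosmetic: you first upgrade a.e.\ convergence to pointwise convergence everywhere (Stage~1), whereas the paper instead selects the partition points from the full-measure convergence set and enlarges the compact interval slightly if its endpoints fall outside that set; and your explicit tail estimate in Stage~3 corresponds to the paper's one-line remark that under the infimum hypothesis one may simply take the rightmost partition point to be $x_n=\infty$.
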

	
	\begin{proof}
		Given $\eps>0$, select a partition $a<x_0<\ldots<x_n<b$ such that for each $i=1,\ldots,n$, $|f(y)-f(z)|<\eps/2$ for all $y,z\in[x_{i-1},x_i]$, and there exists $K_i$ such that  $|f_k(x_i)-f(x_i)|<\eps/2$ whenever $k\geq K_i$. Fix $x\in[x_0,x_n]$ and select $i$ such that $x\in[x_{i-1},x_i]$. Then for $k\geq\max\limits_i K_i$,
		$$
		f(x)-\eps< f(x_i)-\eps/2 \leq f_k(x_i) \leq f_k(x)\leq f_k(x_{i-1})<f(x_{i-1})+\eps/2<f(x)+\eps.
		$$ 
		Thus, for a compact subinterval $I\subset(0,b)$, if $\{f_k\}$ converges at the endpoints of $I$ this shows that $\{f_k\}$ converges uniformly to $f$ on $I$. If $\{f_k\}$ does not converge at either of the endpoints, $I$ can always be extended to a larger compact subinterval $\widetilde{I}\subset(0,b)$ on which the convergence is uniform, implying uniform convergence on $I$. 
		
		In the case $b=\infty$, then the assumption that $\inf_t f_k(t)\rightarrow \inf_t f(t)$ means that one may choose $x_n=\infty$ in the previous argument, giving the result.
	\end{proof}

	The next lemma provides a sufficient condition
	for the decreasing rearrangements of 
	a convergent sequence in $\vmo$ to be relatively compact.
	
	\begin{lemma}
		\label{lem-uniformk2}
		Let $\cS$ be a basis of shapes in a domain
		$\Omega\subset\R^n$, satisfying the
		hypotheses of Theorem~\ref{thm-vmobound}.
		Let $f_k$, $k\in\N$, and $f$ be rearrangeable
		functions in $\BMO{\cS}{}(\Omega)$.
		
		If $f \in \VMO{\cS}{}(\Omega)$, 
		$f_k \ra f$ in $\BMO{\cS}{}(\Omega)$, and
		$f_k^* \ra f^*$
		in $\Lone{(0,b)}$ for some $0<b\leq |\Omega|$, then
		$f_k^*\ra\fast$ in $\bmo(0,b)$.
	\end{lemma}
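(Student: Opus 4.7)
The plan is to combine monotonicity of $f_k^*$, continuity of $f^*$, Pólya's Lemma~\ref{supedPolya}, and the near-origin estimate from the proof of Theorem~\ref{thm-vmobound} to control $\cO(f_k^*-f^*,J)$ uniformly over intervals $J\subset(0,b)$.

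First I would establish pointwise convergence $f_k^*(s)\to f^*(s)$ on $(0,b)$. By Theorem~\ref{thm-vmobound} and Lemma~\ref{jump}, $f^*$ lies in $\vmo(0,|\Omega|)$ and is continuous on $(0,|\Omega|)$. For $s_0\in(0,b)$ and $h>0$ with $[s_0-h,s_0+h]\subset(0,b)$, monotonicity of $f_k^*$ yields
$$
\tfrac{1}{h}\int_{s_0}^{s_0+h}\!f_k^* \;\le\; f_k^*(s_0) \;\le\; \tfrac{1}{h}\int_{s_0-h}^{s_0}\!f_k^*,
$$
and passing to the limit in $k$ via $L^1$-convergence, then in $h$ via continuity of $f^*$, gives the claim. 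Pólya's Lemma then upgrades this to uniform convergence on every compact subinterval of $(0,b)$.

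Given $\epsilon>0$, I would choose $\eta>0$ small enough that $\sup\{\cO(f^*,J'):J'\subset(0,b),\,|J'|<3\eta\}<\epsilon$ and $\int_0^{3\eta}f^*<\epsilon\eta'$, together with an auxiliary scale $\eta'>0$ at which $\sup_{|S|<\eta'}\cO(f,S)<\epsilon$, using $f^*\in\vmo$, integrability of $f^*$ at the origin (Remark~\ref{integrability}), and $f\in\vmo(\cS)$. Then I would take $K$ so that for $k\ge K$, $\|f_k-f\|_{\BMO{\cS}{}}<\epsilon$, $\|f_k^*-f^*\|_{L^1(0,b)}<\epsilon\eta$, and $\|f_k^*-f^*\|_{L^\infty(I)}<\epsilon$ on a relevant compact subinterval $I\subset(0,b)$. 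For an arbitrary $J=(a,c)\subset(0,b)$ I would split into three cases: if $|J|\ge\eta$, the $L^1$-bound $\cO(f_k^*-f^*,J)\le\tfrac{2}{\eta}\|f_k^*-f^*\|_{L^1(0,b)}$ suffices; if $|J|<\eta$ with $c\le 3\eta$ (so $J\subset(0,3\eta)$), I use $\cO(f_k^*-f^*,J)\le\cO(f^*,J)+\cO(f_k^*,J)$ and bound the second term by replicating the near-origin argument of Theorem~\ref{thm-vmobound}'s proof applied to $f_k$ with threshold $\beta_k=f_k^*(c)$, controlling $\sup_{|S|<\eta'}\cO(f_k,S)$ via $\|f_k-f\|_{\BMO{\cS}{}}$ and $\int_0^{3\eta}f_k^*$ via $L^1$-convergence; if $|J|<\eta$ with $c>3\eta$, then $a>\eta$ so $J$ lies within a compact subinterval of $(0,b)$, where uniform convergence provides the bound.

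The central technical obstacle is the third case when $c$ approaches $b$, since such intervals fall outside every compact subinterval of $(0,b)$. When $b=\infty$, Pólya's Lemma extends uniform convergence to $[\eta,\infty)$ provided $L_k:=\lim_{s\to\infty}f_k^*(s)$ converges to $L:=\lim_{s\to\infty}f^*(s)$; this follows from $L^1$-convergence by contradiction, since a persistent gap $L-L_{k_j}\ge\eta_0>0$ along a subsequence would force $|f^*-f_{k_j}^*|\ge\eta_0/2$ on a tail of infinite measure, violating $\|f_{k_j}^*-f^*\|_{L^1(0,\infty)}\to 0$. When $b<\infty$, a mirror near-endpoint estimate---applying the argument of Theorem~\ref{thm-vmobound}'s proof to the truncation $\min(f_k,\beta_k)$ with $\beta_k=f_k^*(a)$ at the left endpoint of $J$---handles these intervals, with the role of $\int_0^{3\eta}f^*$ near the origin played by $\int_{b-3\eta}^{b}(f^*(b-3\eta)-f^*)$ near the endpoint, which is small by the continuity and $\vmo$-behaviour of $f^*$.
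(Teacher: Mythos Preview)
Your overall strategy coincides with the paper's: obtain pointwise convergence of $\{f_k^*\}$ to the continuous function $f^*$, upgrade via Lemma~\ref{supedPolya} to uniform convergence away from the origin, and handle intervals near the origin by the estimate from the proof of Theorem~\ref{thm-vmobound}. Your sandwiching argument for pointwise convergence is actually neater than the paper's subsequence extraction, yielding convergence at every point of $(0,b)$. The case split is organized a bit differently (the paper uses the trichotomy $J\subset(0,\delta)$, $J\subset[\delta/2,b)$, $J\supset(\delta/2,\delta)$), but the substance is the same, and your treatment of small $J$ near the origin and of long $J$ is correct.

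The genuine gap is your ``mirror near-endpoint estimate'' for finite $b$. The near-origin bound in Theorem~\ref{thm-vmobound} works because $(f^*-\beta)_+$ is itself the decreasing rearrangement of $(|f|-\beta)_+$, so Eq.~\eqref{eq-bmobound} gives $\cO(f^*,J)\le c\,\|(|f|-\beta)_+\|_{\BMO{\cS}{}}$, and the small support $(0,c)\subset(0,\delta)$ of $(f^*-\beta)_+$ then controls the integral term. There is no analogous identity at the right end: $(\beta_k-f_k^*)_+$ is \emph{increasing}, not the decreasing rearrangement of $(\beta_k-|f_k|)_+$, so you cannot invoke Eq.~\eqref{eq-bmobound} to pass from $\cO(f_k^*,J)$ to $\|(\beta_k-|f_k|)_+\|_{\BMO{\cS}{}}$. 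Even granting that step, the integral that would arise is $\int_\Omega(\beta_k-|f_k|)_+=\int_{\{f_k^*\le\beta_k\}}(\beta_k-f_k^*)$, whose domain of integration extends out to $|\Omega|$, not merely to $b$; when $b<|\Omega|$ there is no reason for this to be comparable to $\int_{b-3\eta}^b(f^*(b-3\eta)-f^*)$. For $b=\infty$ your conclusion is right but the argument is more elaborate than needed: the hypothesis $f_k^*-f^*\in L^1(\R_+)$ already forces $\lim_{s\to\infty}f_k^*(s)=\lim_{s\to\infty}f^*(s)$ for every $k$, so Lemma~\ref{supedPolya} applies directly on $[\delta,\infty)$ (this is essentially the paper's one-line observation). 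It is worth noting that the paper is itself terse at the right endpoint for finite $b$, simply asserting uniform convergence on $[\delta,b)$ by appeal to Lemma~\ref{supedPolya}; but that lemma, as stated, gives only compact subintervals of $(0,b)$.
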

	
	\begin{proof} Fix $b \in (0, |\Omega|]$ such that $\{\fast_k\}$ converges to $\fast$ in $\Lone{(0,b)}$. Any subsequence of $\{\fast_k\}$ will then also converge to $\fast$ in $\Lone{(0,b)}$ and so have a further subsequence that converges pointwise almost everywhere to $\fast$ on $(0,b)$. It suffices to show that this subsequence, which we continue to denote by $\{f^*_k\}$, converges to $f^*$ in $\bmo(0,b)$.

		%
		
		
		As $\fast$ is continuous by Lemma \ref{jump}, and the  functions $f_k^*$ 
		are monotone decreasing and converge pointwise almost everywhere to $\fast$, 
		Lemma \ref{supedPolya} tells us that the convergence is uniform on $[\delta,b)$ for any $\delta$, $0 < \delta < b$. Note that if $b=\infty$, then the fact that $f_k^*, f^*$ are in $L^1(\R_+)$ means that $\inf_t f^*_k(t)=0= \inf_t f^*(t)$ for all $k$.  
		
		
		
		
		Given such a $\delta$, if $J \subset (0,\delta)$,  
then we have, as in the proof of Theorem~\ref{thm-vmobound}, 
		$$
		\cO(\fast, J) \leq \max\left\{2c\sup_{|S| < \eta}\cO(f,S),  \frac{4c}{\eta}\int_0^{\delta}\fast\right\}\,,
		$$
		and correspondingly for each $\fast_k$.
		Given $\epsilon > 0$, since $f \in \VMO{\cS}{}(\Omega)$, we can choose $\eta > 0$ to make  $2c\sup_{|S| < \eta}\cO(f,S) < \epsilon/2$, and the convergence of $f_k$ to $f$ in $\BMO{\cS}{}(\Omega)$ means that for this $\eta$ and all sufficiently large $k$, $2c\sup_{|S| < \eta}\cO(f_k,S) < \epsilon$.  
For this $\eta$, we can choose $\delta>0$ 
such that $\frac{4c}{\eta}\int_0^{\delta} \fast < \epsilon$, and also, 
since $\{\fast_k\}$ is convergent in $\Lone{(0,b)}$, hence uniformly 
integrable, $\frac{4c}{\eta}\int_0^{\delta}f_k^\ast < \epsilon$ for all $k$.  
		
		Combining, we get that for 
$\delta$ sufficiently small and $k$ sufficiently large, 
		$\cO(f_k^\ast - \fast, J) < 2 \epsilon$ for $J \subset (0,\delta)$.
		By uniform convergence, 
the stronger estimate $\sup_J|f_k^*-f^*|<2\eps$ 
holds when $J \subset [\delta/2,b)$.
		
If $J \subset (0,b)$ is not in one of these cases, 
then $J \supset (\delta/2,\delta)$.
		Let $g = f_k^\ast - \fast$, $I = J \cap (0,\delta)$,  $I' = J \cap (\delta/2,\delta)$. Noting that $|I'| \geq |I|/2$, we can estimate
		$$\cO(g, J) \leq 2 \fint_J |g - g_{I'}| \leq \frac{1}{|J|} \int_{I} |g - g_I| + |g_I - g_{I'}| + \frac{1}{|J|} \int_{J \setminus I} |g - g_{I'}|
		\leq 3\cO(g, I) + 2\sup_{[\delta/2, b)} |g|.$$
		Thus we have shown that for $k$ sufficiently large, $\|f_k^\ast - \fast\|_{\bmo(0,b)} \leq 10 \epsilon.$
	\end{proof}
	
	\begin{theorem} \label{thm-vmo-finite}
		Let $\cS$ be a basis of shapes in a domain
		$\Omega\subset\R^n$, satisfying the
		hypotheses of Theorem~\ref{thm-vmobound}. Let $f_k$, $k \in \N$, and $f$ be rearrangeable functions in $\BMO{\cS}{}(\Omega)$.
		
		If $f \in \VMO{\cS}{}(\Omega)$, and $f_k \ra f$ in ${\BMO{\cS}{}}(\Omega)$
		and in $L^1(\Omega)$, then $\fast_k\to \fast$ in $\bmo(0,|\Omega|)$.
	\end{theorem}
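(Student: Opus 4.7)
The plan is to deduce Theorem~\ref{thm-vmo-finite} as an immediate corollary of Lemma~\ref{lem-uniformk2}, which already contains the hard technical work. The only gap to close is the $L^1$-convergence hypothesis on the rearrangements for some $b\in(0,|\Omega|]$.

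First I would take $b=|\Omega|$ and verify that $\fast_k\to \fast$ in $L^1(0,|\Omega|)$. This follows directly from the hypothesis $f_k\to f$ in $L^1(\Omega)$ together with Property~\ref{R-Lp}: the decreasing rearrangement is non-expansive as a map $L^1(\Omega)\to L^1(0,|\Omega|)$, so
\[
\|\fast_k-\fast\|_{L^1(0,|\Omega|)}\le \|f_k-f\|_{L^1(\Omega)}\ \longrightarrow\ 0.
\]
Note that when $|\Omega|=\infty$, the $L^1(\Omega)$-hypothesis in particular forces $f,f_k\in L^1(\R_+)$, so the complication raised inside Lemma~\ref{lem-uniformk2} about the infima at infinity does not arise; the lemma can be applied with $b=\infty$ without further argument.

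Next I would verify the remaining hypotheses of Lemma~\ref{lem-uniformk2}: the basis $\cS$ is assumed to satisfy the hypotheses of Theorem~\ref{thm-vmobound}; the functions $f_k$ and $f$ are rearrangeable members of $\BMO{\cS}{}(\Omega)$; $f\in\VMO{\cS}{}(\Omega)$; and $f_k\to f$ in $\BMO{\cS}{}(\Omega)$. Combined with the $L^1$-convergence of the rearrangements established in the previous step, the lemma then yields $\fast_k\to \fast$ in $\bmo(0,|\Omega|)$, which is the desired conclusion.

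There is no genuine obstacle, because the delicate part (uniform convergence away from the origin via P\'olya's theorem, and control of the mean oscillation near the origin using the boundedness proof of Theorem~\ref{thm-vmobound}) is already packaged inside Lemma~\ref{lem-uniformk2}. The proof therefore reduces to a two-line application: non-expansiveness of $f\mapsto \fast$ on $L^1$, followed by invocation of the lemma with $b=|\Omega|$.
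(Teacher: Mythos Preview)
Your proposal is correct and follows exactly the same route as the paper: use the non-expansiveness of the decreasing rearrangement on $L^1$ (Property~\ref{R-Lp}) to convert the assumed $L^1(\Omega)$ convergence into $L^1(0,|\Omega|)$ convergence of the rearrangements, then invoke Lemma~\ref{lem-uniformk2} with $b=|\Omega|$. The paper's own proof is essentially the two-line version of what you wrote.
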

	
	\begin{proof}
		By Property \ref{R-Lp}, $\{f_k\}$ to $f$ in $L^1(\Omega)$ implies the convergence of $\{\fast_k\}$ to $\fast$ in $L^1(0,|\Omega|)$. The convergence in $\bmo(0,|\Omega|)$ follows then by taking $b=|\Omega|$ in Lemma \ref{lem-uniformk2}.
	\end{proof}
	
	We are now ready to prove the results given in the introduction. Note that in the case of $\Omega=Q_0$, the assumption of $L^1$ convergence of $\{f_k\}$ follows from $\bmo$ convergence upon normalization of the means --- see Property~{\ref{B-L1}}. 
	

\begin{proof}[Proof of Theorems~\ref{introbound} and~\ref{introcont}]
	The basis $\cQ$ is well known to satisfy the hypotheses of 
Theorem~\ref{thm-vmobound}, see the remark after the density 
condition~\eqref{eq-meas-cont}. 

Let $Q_0$ be a finite cube and $f\in \vmo(Q_0)$.
Since $Q_0$ has finite measure, $f$ is rearrangeable,
and Theorem~\ref{thm-vmobound} yields that $\fast\in\vmo(Q_0)$.
If, moreover, $f_k\to f$ in $\bmo(Q_0)$ and $\fint_{Q_0} f_k\to
\fint_{Q_0} f$, then $f_k\to f$ also in $L^1(Q_0)$.
It follows from Theorem~\ref{thm-vmo-finite} that $\fast_k\to
\fast$ in $\bmo(Q_0)$.
\end{proof}

	It remains to consider the case of infinite domains. While under the condition of the previous theorem, we have convergence of the rearrangements in $\BMO{}{}$ on any finite interval, Example \ref{ex-local} shows that convergence on all of $\R_+$ requires further assumptions at infinity.
	
	\begin{lemma}
		\label{lem-uniformk}
		Let $\cS$ be a basis of shapes in a domain
		$\Omega\subset\R^n$ of infinite measure, satisfying the
		hypotheses of Theorem~\ref{thm-vmobound}.
		Let $f_k$, $k\in\N$, and $f$ be rearrangeable
		functions in $\BMO{\cS}{}(\Omega)$, and write $L:=\inf \fast$.
		
		If $f \in \VMO{\cS}{}(\Omega)$, 
		$f_k \ra f$ in $\BMO{\cS}{}(\Omega)$,
		$f_k^* \ra f^*$
		in $\Lone{(0,b)}$ for every $0<b < \infty$, and
		$$
		\|(L-\fast_k)_+\|_{\BMO{}{}}\ra 0\quad\text{as}\quad k\rightarrow\infty,
		$$
		then
		$f_k^*\ra\fast$ in $\bmo(\R_+)$.
	\end{lemma}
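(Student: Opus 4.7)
The plan is to adapt the three-case argument in the proof of Lemma~\ref{lem-uniformk2} to the infinite-measure setting, using the extra hypothesis $\|(L-\fast_k)_+\|_\bmo \to 0$ to handle the tail. First I would extract a subsequence (still denoted $\fast_k$) that converges pointwise almost everywhere on $\R_+$ by a diagonal argument from the $L^1(0,b)$ convergence for every finite $b$; as in Lemma~\ref{lem-uniformk2}, it suffices to prove that this subsequence converges in $\bmo(\R_+)$.

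The key decomposition is to set $g_k := \max\{\fast_k, L\} = \fast_k + (L-\fast_k)_+$, so that
\[
\|\fast_k - \fast\|_{\bmo(\R_+)} \le \|g_k - \fast\|_{\bmo(\R_+)} + \|(L-\fast_k)_+\|_{\bmo(\R_+)}.
\]
The second term vanishes by hypothesis, reducing the task to showing $g_k \to \fast$ in $\bmo(\R_+)$. Each $g_k$ is monotone decreasing, satisfies $g_k \ge L$, converges pointwise a.e.\ to $\fast$, and crucially satisfies $\inf g_k \to L$: indeed $\inf g_k \ge L$, while for any $t$ where $g_k(t) \to \fast(t)$ we have $\limsup_k \inf g_k \le \fast(t)$, so letting $t \to \infty$ (recall that $\fast$ is continuous with $\inf \fast = L$ by Lemma~\ref{jump}) gives $\limsup_k \inf g_k \le L$. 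This is the upgrade that permits invoking the $b=\infty$ case of Lemma~\ref{supedPolya} to obtain uniform convergence of $g_k$ to $\fast$ on $[a,\infty)$ for every $a > 0$, in contrast with the bounded-set uniform convergence used in Lemma~\ref{lem-uniformk2}.

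With this uniform convergence secured, I would repeat the three-case analysis of Lemma~\ref{lem-uniformk2} for $g_k - \fast$. For $J \subset (0,\delta)$, I apply the bound from the proof of Theorem~\ref{thm-vmobound} to both $g_k = (\max\{f_k, L\})^*$ (via Property~\ref{R-trunc}) and to $\fast$; Property~\ref{O-trunc} yields $\cO(\max\{f_k, L\}, S) \le \cO(f_k, S)$, so the oscillation side is controlled by $\sup_{|S|<\eta} \cO(f_k, S)$, uniformly small for large $k$ by the $\bmo$-convergence of $f_k$ to $f$ together with the VMO property of $f$, while the integrability side uses $\int_0^\delta g_k \le \int_0^\delta \fast_k + L\delta$, uniformly small by the uniform integrability of $\{\fast_k\}$ near the origin coming from their $L^1(0,b)$ convergence. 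For $J \subset [\delta/2, \infty)$, the uniform convergence bounds $\cO(g_k - \fast, J) \le 2\sup_{[\delta/2,\infty)}|g_k - \fast|$, small for large $k$. The straddling case $J \supset (\delta/2,\delta)$ is handled by the same bridging inequality as at the end of Lemma~\ref{lem-uniformk2}. Since every subsequence has a further subsequence with this convergence, the whole sequence converges in $\bmo(\R_+)$.

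The main obstacle I anticipate is the step $\inf g_k \to L$: Example~\ref{ex-inf} shows that $\inf \fast_k$ itself can fail to converge to $L$, so Polya at infinity cannot be applied directly to $\fast_k$. Cutting off at height $L$ is precisely what converts the $\bmo$-smallness of the defect $(L-\fast_k)_+$ into the monotone-convergence statement needed by Polya, and this is the conceptual heart of the argument. The remaining uniform-in-$k$ estimates near the origin are mild extensions of what already appears in Lemma~\ref{lem-uniformk2}.
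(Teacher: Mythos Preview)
Your proposal is correct and shares the paper's central idea: write $\fast_k=\max\{\fast_k,L\}-(L-\fast_k)_+$, discard the second piece by hypothesis, and prove $g_k:=\max\{\fast_k,L\}\to\fast$ in $\bmo(\R_+)$. Where you diverge from the paper is in how you control $g_k-\fast$ at infinity. The paper never appeals to the $b=\infty$ clause of Lemma~\ref{supedPolya}; instead it fixes $a\in E$, $b\ge 2a$, and on $J\subset(a,\infty)$ uses only the crude monotone bounds $|\fast-L|\le \fast(a)-L$ and $|g_k-L|\le(\fast_k(a)-L)_+$ to get $\limsup_k\|\fast-g_k\|_{\bmo(\R_+)}\le 4(\fast(a)-L)$, then lets $a\to\infty$. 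Your route is cleaner: you observe that truncation at $L$ forces $\inf g_k\to L$ purely from the a.e.\ convergence (note that this step does not actually use the $\bmo$-smallness of $(L-\fast_k)_+$, contrary to your closing remark; that hypothesis is used only in the splitting), which unlocks uniform convergence on every $[\delta/2,\infty)$ via Lemma~\ref{supedPolya}, and then the three-case argument becomes a verbatim copy of Lemma~\ref{lem-uniformk2}. One small correction: $g_k$ is the rearrangement of $\max\{|f_k|,L\}$, not $\max\{f_k,L\}$, by Property~\ref{R-trunc}; since $\cO(\max\{|f_k|,L\},S)\le\cO(|f_k|,S)\le 2\cO(f_k,S)$ by Properties~\ref{O-trunc} and~\ref{O-abs}, your small-shape estimate still goes through with an extra factor of~$2$.
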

The hypothesis that $(L-\fast_k)_+\to 0$ 
can be replaced by the convenient assumption
that $\inf \fast_k \to L$ as $k\to \infty$,
i.e., $\mu_{f_k}(\alpha)\to \infty$
for all $\alpha<L$. However, this 
assumption is strictly stronger, 
see Example~\ref{ex-inf}.

	\begin{proof}[Proof of Lemma~\ref{lem-uniformk}]
		Consider $f_k$, $k \in \N$, and $f$ satisfying the 
hypotheses. By Theorem~\ref{thm-vmobound},
		$\fast\in \vmo(0,|\Omega|)$. 

Exhausting $\R_+$ by intervals of 
the form $(0,b)$ for $0<b<\infty$, and using the convergence 
of $\{\fast_k\}$ to $\fast$ in $\Lone{(0,b)}$, we see that 
any subsequence of $\{\fast_k\}$ has a further subsequence that converges pointwise almost everywhere to $\fast$ on $\R_+$. Denote by $E\subset\R_+$ the set on which convergence holds. It suffices to show that this subsequence, which we continue to denote by $\{f^*_k\}$, converges to $f^*$ in $\bmo(\R_+)$.
We will show that for any $a\in E$,
\begin{equation}
\label{eq:uniformk}
 \limsup_{k\to\infty}\|\fast-\fast_k\|_{\bmo(\R_+)}\leq 4(\fast(a)-L)\,.
\end{equation}
The result follows by taking $a\rightarrow\infty$ 
from the monotonicity 
of $\fast$ and the definition of $L$.

Note that for any $0<\lambda<\infty$, we can write $\fast_k=\max\{\fast_k,\lambda\}-(\lambda-\fast_k)_+$. Taking $\lambda=L$, we have that
                \begin{equation}
                        \label{eq-vmo-split}
                        \|\fast-\fast_k\|_{\bmo(\R_+)}
                        \le \|\fast-\max\{\fast_k,L\}\|_{\bmo(\R_+)}
                        + \|(L-\fast_k)_+\|_{\bmo(\R_+)}\,.
                \end{equation}
                By assumption, the last term converges to zero as $k\ra \infty$.

For the first term on the right hand side of Eq.~\eqref{eq-vmo-split}, 
let $a\in E$, take $b\ge 2a$, and consider an arbitrary
interval $J\subset\R_+$.
For $J\subset (0,b)$ we use that 
$\fast_k(s)-\max\{\fast_k(s), L\}=(\fast_k(s)-L)_+$
decreases with $s$ to estimate
                \[
                \begin{split}
                        \sup_{J\subset(0,b)}\cO(\fast-\max\{\fast_k,L\},J)
                        &\leq \sup_{J\subset(0,b)}\cO(\fast-\fast_k,J)+\sup_{J\subset(0,b)}\cO(\fast_k-\max\{\fast_k,L\},J)\\
                        &\leq \|\fast-\fast_k\|_{\bmo(0,b)} + (L-\fast_k(a))_+\,.
                \end{split}
                \]
              As $k\rightarrow\infty$, the first term converges to zero by Lemma \ref{lem-uniformk2} and the second converges to $(L-\fast(a))_+=0$ 
since $a\in E$.
For $J\subset(a,\infty)$ we have
                \[
                \begin{split}
\sup_{J\subset(a,\infty)}
                        \cO(\fast-\max\{\fast_k,L\},J)
                        &\leq 
\sup_{J\subset(a,\infty)}\frac{1}{|J|}
\int_J |\fast-L|+
\sup_{J\subset(a,\infty)} 
\int_J |L-\max\{\fast_k,L\}| \\
                        &\leq \sup_{s\geq a}|\fast(s)-L|
                     +\sup_{s\geq a}(\fast_k(s)-L)_+\\
                        &= (\fast(a)-L)+(\fast_k(a)-L)_+\,,
                \end{split}
                \]
                which converges to $2(\fast(a)-L)$ as $k\rightarrow\infty$
since $a\in E$.
Finally, if $J\supset(a,b)$, then
                $$
                \frac{1}{|J|}\int_{J\cap (0,b)}\!|\fast-\max\{\fast_k,L\}|\leq \frac{1}{b-a}\|\fast-\max\{\fast_k,L\}\|_{\Lone(0,b)}\leq \frac{1}{a}\|\fast-\fast_k\|_{\Lone(0,b)}
                $$
                and
                \[
                \begin{split}
                \frac{1}{|J|}\int_{J\cap (b,\infty)}\!|\fast-\max\{\fast_k,L\}|
&\leq (\fast(a)-L) + (\fast_k(a)-L)_+,
\end{split}
\]
where we have again used monotonicity of $\fast$ and $\fast_k$
in the last step.
It follows that
                $$
\sup_{J\supset (a,b)}
                \cO(\fast-\max\{\fast_k,L\},J)\leq \frac{2}{a}\|\fast-\fast_k\|_{\Lone(0,b)} +2 (\fast(a)-L) + 2(\fast_k(a)-L)_+.
                $$
                As $k\rightarrow\infty$, 
the first term on the right hand side
converges to zero by assumption, and the last
term converges to $2(\fast(a)-L)$ since $a\in E$.
This completes the proof of Eq.~\eqref{eq:uniformk}.
	\end{proof}

	\begin{theorem} \label{thm-vmo-infinite}
		Let $\cS$ be a basis of shapes in a domain
		$\Omega\subset\R^n$ of infinite measure, satisfying the
		hypotheses of Theorem~\ref{thm-vmobound}.
		Let $f_k$, $k \in \N$, and $f$ be rearrangeable functions
		in $\BMO{\cS}{}(\Omega)$, and write $L:=\inf \fast$.
		
		If $f\in\vmo_{\cS}(\Omega)$, $f_k \ra f$ in ${\BMO{\cS}{}}(\Omega)$,
		$0\le f_k\uparrow f$ pointwise, and $(L-\fast_k)_+ \to 0$ in $\bmo(\R_+)$,
		then $\fast_k\to \fast$ in $\bmo(\R_+)$.
	\end{theorem}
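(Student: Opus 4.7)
The plan is to deduce the theorem directly from Lemma~\ref{lem-uniformk}. Three of the four hypotheses of that lemma are already part of the statement: $f\in\VMO{\cS}{}(\Omega)$, $f_k\to f$ in $\BMO{\cS}{}(\Omega)$, and $\|(L-\fast_k)_+\|_{\bmo}\to 0$. What remains to verify is the local $L^1$-convergence $\fast_k\to \fast$ in $L^1(0,b)$ for every finite $b>0$.

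To establish this, I would use the pointwise monotonicity assumption. Since $0\le f_k\uparrow f$ pointwise and $f_k,f$ are rearrangeable, Property~\ref{R-ptwise} upgrades this to $\fast_k\uparrow \fast$ pointwise on $(0,\infty)$. Thus $\fast-\fast_k\ge 0$ is a decreasing sequence tending pointwise to zero. The boundedness hypothesis inherited from Theorem~\ref{thm-vmobound} places $\fast\in\bmo(\R_+)$, so by Remark~\ref{integrability} we have $\fast\in L^1(0,b)$ for every finite $b$. Dominated convergence, with dominating function $\fast$, then yields
$$
\int_0^b (\fast-\fast_k)\longrightarrow 0\,,
$$
which is the required local $L^1$-convergence.

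With all four hypotheses of Lemma~\ref{lem-uniformk} in force, that lemma gives $\fast_k\to\fast$ in $\bmo(\R_+)$, completing the proof. The role of the monotone convergence assumption is to compensate for the absence of a global $L^1(\Omega)$ hypothesis (which is what drives Theorem~\ref{thm-vmo-finite} in the finite-measure case): on a domain of infinite measure one cannot expect $L^1(\Omega)$ convergence of $f_k$ to $f$, but pointwise monotone convergence of nonnegative rearrangeable functions suffices, via Property~\ref{R-ptwise} and dominated convergence, to produce the local $L^1$-convergence of rearrangements needed to invoke Lemma~\ref{lem-uniformk}. No substantial obstacle is anticipated, as the main analytic content has already been carried out in Lemma~\ref{lem-uniformk}.
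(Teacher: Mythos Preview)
Your proposal is correct and follows the same route as the paper: use Property~\ref{R-ptwise} to get $\fast_k\uparrow\fast$ pointwise, deduce local $L^1$-convergence of the rearrangements, and invoke Lemma~\ref{lem-uniformk}. The only cosmetic difference is that the paper appeals to monotone convergence rather than dominated convergence for the step $\int_0^b(\fast-\fast_k)\to 0$; both are valid here since $0\le\fast_k\uparrow\fast$ with $\fast\in L^1(0,b)$.
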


\begin{proof}
	By Property~\ref{R-ptwise}, if  $f_k\uparrow f$ on $\Omega$, 
	then $\fast_k\uparrow\fast$ 
	on $\R_+$. By monotone convergence, $\fast_k\ra\fast$ in $\Lone(0,b)$ for any $b<\infty$, and we can apply Lemma \ref{lem-uniformk}.
\end{proof}

By Lemma~\ref{SDRlocalequivalent}, the conclusion of 
Theorem~\ref{thm-vmo-infinite} directly  extends to the symmetric decreasing
	rearrangement.
	
	\begin{corollary}\label{corsymcont} Under the hypotheses of 
		Theorem~\ref{thm-vmo-infinite}, $Sf_k\rightarrow Sf$ in $\bmo(\R^n)$.
	\end{corollary}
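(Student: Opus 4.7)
The plan is to apply Lemma~\ref{SDRlocalequivalent} directly to transfer the $\bmo(\R_+)$ convergence of $\fast_k - \fast$ furnished by Theorem~\ref{thm-vmo-infinite} into $\bmo(\R^n)$ convergence of $Sf_k - Sf$. This mirrors the strategy used for Corollary~\ref{corsymbound} (where the boundedness of the symmetric rearrangement on $\vmo$ was deduced from the corresponding one-dimensional statement), but now applied to the pair $(f_k, f)$ in place of $(f, 0)$.

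Concretely, I would fix an arbitrary cube $Q \subset \R^n$, centred at a point $x$ with diameter $d$, and choose $R = |x| + d/2$. This choice guarantees $Q \subset B(0,R)$ and $|x| = R - d/2$, so the hypotheses of Lemma~\ref{SDRlocalequivalent} are satisfied. Applying the lemma with $f_1 = f_k$ and $f_2 = f$ (both rearrangeable by hypothesis) produces an interval $I \subset (0, \omega_n R^n)$ such that
$$
\cO(Sf_k - Sf, Q) \leq n^{n/2} \omega_n\, \cO(\fast_k - \fast, I) \leq n^{n/2} \omega_n \|\fast_k - \fast\|_{\bmo(\R_+)}\,.
$$
Since the right-hand side is independent of $Q$, taking the supremum over all cubes in $\R^n$ gives the uniform estimate
$$
\|Sf_k - Sf\|_{\bmo(\R^n)} \leq n^{n/2} \omega_n\, \|\fast_k - \fast\|_{\bmo(\R_+)}\,.
$$

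By Theorem~\ref{thm-vmo-infinite}, under the stated hypotheses $\fast_k \to \fast$ in $\bmo(\R_+)$, so the right-hand side tends to zero as $k \to \infty$, completing the proof. There is no real obstacle: all the substantive work has been done in Lemma~\ref{SDRlocalequivalent}, which reduces the mean oscillation of the symmetric decreasing rearrangement on cubes to that of the one-dimensional decreasing rearrangement on intervals, and in Theorem~\ref{thm-vmo-infinite}, which provides the $\bmo(\R_+)$ convergence. The corollary is essentially a formal consequence of combining them.
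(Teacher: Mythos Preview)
Your proposal is correct and is precisely the argument the paper has in mind: it simply states that the conclusion ``directly extends'' by Lemma~\ref{SDRlocalequivalent}, and you have filled in exactly those details by applying the lemma with $f_1=f_k$, $f_2=f$ to bound $\|Sf_k-Sf\|_{\bmo(\R^n)}$ by a dimensional constant times $\|\fast_k-\fast\|_{\bmo(\R_+)}$, which vanishes by Theorem~\ref{thm-vmo-infinite}.
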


\section*{{Acknowledgements}}

Thanks to Ruben Calzadilla-Badra for discussions on P\'{o}lya's theorem in
connection with an NSERC Undergraduate Student Research Award
project at Concordia. 


\end{document}